\documentclass{amsproc}

\usepackage{tikz}
\usetikzlibrary{automata,cd,shapes}
\usepackage{faktor} 
\usepackage{algorithm}
\usepackage{algpseudocode}
\usepackage{amssymb}
\usepackage{mathtools}
\usepackage[T1]{fontenc}

\usepackage{}

\newtheorem{theorem}{Theorem}

\newtheorem{lemma}{Lemma}[section]
\newtheorem{proposition}[lemma]{Proposition}

\theoremstyle{definition}

\theoremstyle{remark}
\newtheorem{remark}[lemma]{Remark}

\numberwithin{equation}{section}

\DeclareMathOperator{\Sym}{Sym}

\DeclareMathOperator{\PSL}{PSL}
\DeclareMathOperator{\CR}{CR}
\DeclareMathOperator{\tr}{tr}
\newcommand\uEA{\mathcal{EA}}
\newcommand\C{\mathbb{C}}

\newcommand\N{\mathbb{N}}

\newcommand\D{\mathbb{D}}

\newcommand\PP{\mathbf{P}} 
\newcommand\BP{\mathbf{P}}

\newcommand{\TotalDiff}[2]{\mathbf{D}(#1)_{|#2}}
\newcommand{\EA}{\operatorname{EA}}
\newcommand{\W}{\operatorname{W}}
\newcommand{\JEA}{\operatorname{JEA}}
\newcommand{\GSW}{\operatorname{GSW}}
\begin{document}

\title{Diverging orbits for the Ehrlich--Aberth and the Weierstrass root finders}

\author{Bernhard Reinke}
\address{Institut de Mathématiques (UMR CNRS7373) \\
Campus de Luminy \\
163 avenue de Luminy --- Case 907 \\
13288 Marseille 9 \\
France}
\curraddr{}
\email{}
\thanks{}

\subjclass[2020]{65H04 (Primary) 37F80, 37N30, 68W30 (Secondary)}
\keywords{Weierstrass, Ehrlich--Aberth, root-finding methods, diverging orbits}
\date{\today}

\begin{abstract}
We show that the higher dimensional ``Weierstrass'' and ``Ehrlich--Aberth'' methods for finding roots of polynomials have infinite orbits that diverge to infinity. This is possible for the Jacobi update scheme (all coordinates are updated in parallel) as well as Gauss--Seidel (any coordinate update is used for all subsequent coordinates).
\end{abstract}

\maketitle

\section{Introduction}
Finding roots of univariate complex polynomials numerically is one of the fundamental problems for numerical
algebraic geometry. %
Many numerical algorithms for finding roots can be interpreted as complex dynamical systems. 
For example, the Newton method
can be understood as a one-dimensional holomorphic dynamical system, and this was used in \cite{NewtonHSS} to give a good set of starting points that are guaranteed to find all roots. Other methods, such as the Ehrlich--Aberth method and the Weierstrass method, give rise to rational maps $\C^n \dasharrow \C^n$ for polynomials of degree $n$. In contrast to the Newton method, these higher-dimensional methods approximate all roots at the same time.

So far, not much is known about the global dynamics of higher-dimensional root-finding methods. In \cite{reinke_weierstrass_2020}, two phenomena were established for the Jacobi variant of the Weierstrass method: the existence of attracting cycles (and thereby non-general convergence), and the
existence of diverging orbits to infinity. This is a stark contrast to the Newton method: there $\infty \in \BP^1$ is a repelling fixed point.

In this paper we continue the investigation of diverging orbits. We show the following:
\begin{theorem}
  For every degree $d\geq 4$, there is a polynomial $\tilde{p}$ of degree $d$ with distinct roots such that the Jacobi variant of the Ehrlich--Aberth method for $\tilde{p}$ has diverging orbits in $\C^d$.
  \label{cor:eadeg}
\end{theorem}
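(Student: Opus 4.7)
The proof will exploit the behavior of the Ehrlich--Aberth iteration near the hyperplane at infinity of $\C^d$.

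First, I introduce scaled coordinates $z_i = t w_i$ and compute the asymptotic form of the iteration as $t \to \infty$. Using the expansion $p'(z)/p(z) = d/z + O(1/z^2)$ together with $1/(z_i - z_j) = (1/t)/(w_i - w_j)$, a direct calculation yields $z_i^{new} = t\cdot T_\infty(w)_i + O(1)$, where $T_\infty$ is scale-equivariant in $w$ and coincides with the Ehrlich--Aberth iteration applied to the monic polynomial $z^d$. Consequently $T_\infty$ descends to a rational self-map $\overline T$ of $\BP^{d-1}$, and the original iteration near infinity acts asymptotically as $(t, [w]) \mapsto (\lambda([w]) \cdot t, \overline T [w])$, where $\lambda$ is a positive cocycle on $\BP^{d-1}$ recording the radial scaling.

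Second, I look for a periodic orbit of $\overline T$ whose total radial cocycle exceeds one and along which $\overline T$ is tangentially contracting; such an orbit gives a transversally attracting, radially expanding cycle at infinity, whose stable manifold in the compactification contains an open set of diverging orbits in the affine chart. The natural symmetric configuration at the $d$-th roots of unity is a fixed point of $\overline T$ with radial scaling $(d-1)/(d+1) < 1$ and therefore fails. I break the symmetry by partitioning the coordinates into $k$ diverging and $d - k$ bounded ones: the diverging block is governed asymptotically by Ehrlich--Aberth applied to $z^k$, while the bounded coordinates evolve as a perturbation of Ehrlich--Aberth applied to a degree-$(d-k)$ polynomial. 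An asymmetric cycle of the reduced $z^k$-dynamics---produced by a perturbative construction starting from a degenerate configuration in which two coordinates coalesce, or found by direct computation in low degree---yields the desired non-contracting cycle at infinity provided $d \geq 4$.

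Third, the polynomial $\tilde p$ is constructed by placing its roots near the limiting positions of the $d - k$ bounded coordinates and by tuning its remaining coefficients so that the iteration on the compactification genuinely has the anticipated hyperbolic cycle at infinity. A standard stable manifold theorem from hyperbolic dynamics then produces an open set of initial data in $\C^d$ whose forward orbits diverge to infinity, proving the theorem.

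The main obstacle is the construction in step two of a periodic orbit of $\overline T$ whose radial cocycle strictly exceeds one. All the natural symmetric candidates contract radially, so the proof must exhibit genuinely asymmetric cycles; the assumption $d \geq 4$ presumably reflects the smallest dimension in which such an asymmetric cycle can be arranged and in which the perturbative lift from $\overline T$ to the finite-$t$ iteration can be controlled simultaneously with the evolution of the bounded block of coordinates.
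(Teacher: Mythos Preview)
Your proposal is a strategy sketch, not a proof, and the decisive step is missing. You yourself identify ``the main obstacle'' as the construction of a periodic orbit of the map $\overline T$ at infinity whose radial cocycle exceeds one and which is tangentially contracting, and you never actually construct one: you invoke ``a perturbative construction starting from a degenerate configuration'' or ``direct computation in low degree'' without carrying either out, and you close with ``presumably reflects''. Everything hinges on this orbit; without it, the stable manifold argument has nothing to attach to. There is also a mismatch between your two pictures of infinity: the initial $\BP^{d-1}$ compactification forces all coordinates to diverge at the same rate, whereas your later partition into $k$ diverging and $d-k$ bounded coordinates corresponds to approaching a coordinate subspace of $\BP^{d-1}$, where $\overline T$ is typically indeterminate; you do not explain how to resolve this.

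The paper proceeds quite differently and much more concretely. Because the Ehrlich--Aberth step is M\"obius-equivariant (not merely affine-equivariant), the natural compactification is $(\BP^1)^d$ rather than $\BP^{d-1}$, and one can allow a \emph{single} coordinate to sit at $\infty$. For $d=4$ the paper exhibits an explicit family of $2$-cycles via harmonic quadruples: for $p_\lambda(z)=(z^2-1)(z^2-\lambda^4)$ the point $(\lambda,-\lambda,0,\infty)\in(\BP^1)^4$ is periodic of period $2$. The differential of the first return map is written down exactly, its four eigenvalues are computed as explicit algebraic functions of $\lambda$, and a specific value of $\lambda$ is found for which three eigenvalues lie outside $\overline\D$ and one equals $0$; perturbing $\lambda$ makes the point hyperbolic with a one-dimensional stable manifold meeting $\C^4$. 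Higher degrees are obtained by a clean embedding lemma: if $\tilde p(z)=(z-\alpha)p(z)$ then $\iota_\alpha(z_1,\dots,z_d)=(z_1,\dots,z_d,\alpha)$ intertwines $\JEA_p$ with $\JEA_{\tilde p}$, so one simply appends $d-4$ extra coordinates frozen at new roots. Your partition idea is in spirit close to this embedding, but the paper's use of M\"obius invariance and the harmonic-quadruple construction is what actually produces the periodic point you were unable to locate.
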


\begin{theorem}
For every polynomial $\tilde{p}$ with distinct roots of degree $d \geq 3$, the Gauss--Seidel variant of the Weierstrass method for $\tilde{p}$ has diverging orbits in $\C^d$.
  \label{cor:wsdeg}
\end{theorem}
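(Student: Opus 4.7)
The plan is to reduce Theorem~\ref{cor:wsdeg} to the cubic case $d = 3$ and then exhibit an explicit diverging orbit coming from an attracting configuration at infinity for the Gauss--Seidel dynamics.

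\textbf{Reduction to $d = 3$.} Write $\tilde p(x) = \prod_{j=1}^d (x - z_j^*)$ with distinct roots $z_j^*$. The Weierstrass step fixes any $w_i$ equal to a root, since then $\tilde p(w_i) = 0$. If we place $w_j^{(0)} = z_j^*$ for $j = 4, \ldots, d$, these coordinates remain fixed under Gauss--Seidel iteration (as long as no denominator vanishes, a generic condition in the regime below). Moreover, in the updates of $w_1, w_2, w_3$ the factor $\prod_{j \geq 4}(w_i - z_j^*)$ appearing in $\tilde p(w_i)$ cancels against the same factor in the Weierstrass denominator $\prod_{k \neq i}(w_i - w_k)$. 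The induced dynamics on $(w_1, w_2, w_3)$ is then exactly the Gauss--Seidel Weierstrass iteration for the cubic $q(x) = (x - z_1^*)(x - z_2^*)(x - z_3^*)$, which again has distinct roots. It therefore suffices to produce a diverging orbit for any cubic with distinct roots.

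\textbf{The cubic case.} For any such cubic with update order $1, 2, 3$, we look for orbits of the form $(w_1^{(k)}, R_k, \rho R_k)$ with $w_1^{(k)}$ bounded, $R_k \to \infty$, and $\rho \in \C^*$ constant. A leading-order expansion in $1/R$ yields
\[
W_1 = w_1 + O(R^{-2}), \qquad W_2 = \tfrac{\rho}{\rho - 1}\,R + O(1), \qquad W_3 = \tfrac{\rho}{2 - \rho}\,R + O(1).
\]
The self-similarity requirement $W_3/W_2 = \rho$ reduces to $\rho^2 - \rho - 1 = 0$, whose positive solution $\rho = \phi := (1+\sqrt{5})/2$ gives scale factor $\phi/(\phi - 1) = \phi^2 > 1$. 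To leading order, the orbit grows like $\phi^{2k}$ and diverges.

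\textbf{Making this rigorous.} To upgrade the asymptotic picture to actual orbits, we construct a forward-invariant set of the form
\[
C = \{ (w_1, w_2, w_3) \in \C^3 : |w_1| \leq M, \ |w_2| \geq R_0, \ |w_3/w_2 - \phi| \leq \epsilon \},
\]
and check via the expansions above that for $R_0$ large and $\epsilon, M$ chosen appropriately, the Gauss--Seidel map sends $C$ into itself with $|W_2| \geq \tfrac{3}{2}|w_2|$. This reflects the fact that the fixed point at infinity is hyperbolic: one expanding direction of eigenvalue $\phi^2$ (the scale) and two contracting directions (the ratio $w_3/w_2$ and the coordinate $w_1$).

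The main obstacle is the careful bookkeeping of the error terms in the expansions, uniformly across iterates, needed to verify forward invariance of $C$. Once that is done, any initial condition in $C$ produces an orbit with $|w_2^{(k)}|, |w_3^{(k)}| \to \infty$; combined with the reduction above, this yields diverging orbits in $\C^d$ for every polynomial with distinct roots of degree $d \geq 3$.
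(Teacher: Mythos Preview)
Your reduction from degree $d$ to degree $3$ is correct and is essentially the paper's embedding lemma. The cubic analysis, however, contains a fatal error. Your leading-order formulas $W_2 \approx \tfrac{\rho}{\rho-1}R$ and $W_3 \approx \tfrac{\rho}{2-\rho}R$ are right, but they show that the induced map on the ratio $\rho = w_3/w_2$ is
\[
\rho \longmapsto g(\rho) := \frac{\rho-1}{2-\rho},\qquad g'(\phi)=\frac{1}{(2-\phi)^2}=\phi^4\approx 6.85>1,
\]
so the ratio direction at $\rho=\phi$ is \emph{repelling}, not contracting. Consequently the set $C=\{|w_3/w_2-\phi|\le\epsilon,\dots\}$ is not forward-invariant: one Gauss--Seidel step sends $|\rho-\phi|\le\epsilon$ to roughly $\phi^4\epsilon$. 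The other root $\rho=-1/\phi$ of $\rho^2-\rho-1=0$ does make $g$ contracting, but there the scale factor is $\rho/(\rho-1)=1/\phi^2<1$, so orbits shrink rather than diverge. Note also that $\partial W_1/\partial w_1 = 1+O(R^{-2})$ is neutral, not contracting, in $\C^3$.

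In projective terms your configuration is the point $[0:1:\phi:0]\in\BP^3$, which on the plane at infinity is a saddle with eigenvalues $\phi^{-2}$ and $\phi^4$; it is exactly one of the solutions the paper's computer-algebra search discards (those from the factor $\lambda^2-3\lambda+1$, which have a vanishing coordinate). The paper instead finds, via \emph{Singular}/\emph{Maxima}, a fixed point of the projectivized map on $\BP^2$ that is fully \emph{repelling} there and has no zero coordinate; then the only attracting eigenvalue is the transverse one $\mu^{-1}$, and the stable manifold theorem produces orbits in $\C^3$ diverging in \emph{every} coordinate. That last feature is also what makes the degree-raising step clean: it guarantees the moving coordinates eventually avoid the frozen roots $\alpha_4,\dots,\alpha_d$, whereas in your scheme the bounded $w_1$ could in principle collide with one of them.
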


In Section~\ref{sec:hd} we provide common background of the discussed root-finding methods and a discussion for the various variants on how to use them.
We discuss the Jacobi variant of the Ehrlich--Aberth method in Section~\ref{sec:ea} and the Gauss--Seidel variant of the Weierstrass method in Section~\ref{sec:ws}.

\emph{Acknowledgements.} We gratefully acknowledge support by the Advanced Grant HOLOGRAM by the European Research Council. 
The proof of both theorems were obtained with the help of computer algebra systems, in particular the \emph{Maxima}~\cite{Maxima} kernel of \emph{SageMath}~\cite{sagemath} and \emph{Singular}~\cite{DGPS}.

\section{Higher-dimensional root-finding methods}
\label{sec:hd}
In this section we define the maps under consideration. Let $p \in \C[z]$ be a monic polynomial of degree $n$. The root-finding methods
considered in this paper try to find all $n$ roots at the same time, by doing an iteration on $\C^n$. In fact, we
will consider the root-finding methods as rational maps $F \colon \C^n \dashrightarrow \C^n$, and try to extend them to sensible completions of $\C^n$.

We have an update function $g_p \colon \C \times \C^{n-1} \rightarrow \C$ where heuristically
$g_p(z_i;z_1,\dots, \hat{z_i},\dots z_n)$ tries to find a better approximation for $z_i$ to a root of $p$ under the assumption that all other coordinates
$z_1,\dots, \hat{z_i},\dots z_n$ are already close to roots of $p$. We will only consider update functions which are symmetric in the $\C^{n-1}$ component.

We consider two kinds of using the update function $g_p$ on an approximation vector $(z_1,\dots,z_n)$. One possibility is to apply $g_p$ simultaneously on every component, this is called  the Jacobi variant. Since we assume that $g_p$ is symmetric in the $\C^{n-1}$ component, the Jacobi variant is $S_n$ equivariant.

Another possibility is to apply the update function one at a time, and using the results already for the coming coordinates. This is called the Gauss--Seidel variant. See the algorithm listings. Our naming scheme comes from classical methods in numerical linear algebra.%

\begin{algorithm}
  \caption{Jacobi variant}
  \begin{algorithmic}
   \Function{Jacobi}{$z_1,\dots,z_n$}
   \For{$i \in 1,\dots,n$}
   \State $z'_i \leftarrow g_p(z_i;z_1,\dots, \hat{z_i},\dots z_n)$
   \EndFor 
    \State return $(z'_1,\dots,z'_n)$ 
   \EndFunction
  \end{algorithmic}
  \label{algo:j}
\end{algorithm}

\begin{algorithm}
  \caption{Gauss--Seidel variant}
  \begin{algorithmic}
    \Function{GaussSeidel}{$z_1,\dots,z_n$}
    \For{$i \in 1,\dots,n$}
    \State $z_i \leftarrow g_p(z_i;z_1,\dots, \hat{z_i},\dots z_n)$
   \EndFor 
    \State return $(z_1,\dots,z_n)$ 
   \EndFunction
  \end{algorithmic}
  \label{algo:gs}
\end{algorithm}

Both the Weierstrass method and Ehrlich--Aberth method use the rational function

\[q_i(z) = \frac{p(z)}{\prod_{j\not=i}(z-z_j)}\] as basis for their heuristic step. If $p(z) = \prod_j (z - \alpha_j)$ and all $z_j$ apart from $z_i$ are ``close'' to roots $\alpha_j$ of $p$, then
  $q_i$ is ``close'' to the linear function $(z - \alpha_i)$. The Weierstrass update $\W_p(z_i;z_1,\dots, \hat{z_i},\dots z_n)$ tries to solve $q_i$, pretending that it is a linear function, so 
  \begin{equation}
    \W_p(z_i;z_1,\dots, \hat{z_i},\dots z_n) = z_i - q_i(z_i)\text{.}
    \label{def:wsstep}
  \end{equation}

  For the Ehrlich--Aberth function, we do a Newton step
  
  \begin{equation}
    \EA_p(z_i;z_1,\dots, \hat{z_i},\dots z_n) = z_i - \frac{q_i(z_i)}{{q'}_i(z_i)}\text{.}
    \label{def:eastep}
  \end{equation}
\begin{remark}
  We can also think of the Gauss--Seidel variant as the $n$-th iterate of the function
  $R_p \colon (z_1,\dots,z_n) \mapsto (z_2,\dots,z_n,g_p(z_1;z_2,\dots,z_n))$. This is a kind of cyclic shift,
  it takes the first coordinate, applies the step function, and moves it to the end. Since we assume that the step functions
  are symmetric in the additional coordinates, it is easy to see that $R^n_p$ is indeed the whole Gauss-Seidel step.
  \label{rem:cyclic}
  \end{remark}
\section{Ehrlich--Aberth}
\label{sec:ea}
\subsection{Möbius invariance}

\begin{lemma}[Field interpretation]
  Let $p(z) = \prod_i (z - \alpha_i)$ and $(z_1,\dots,z_n) \in \C^n$. 
  Generically, 
  \begin{equation}
    z_i' = \EA_p(z_i;z_1,\dots, \hat{z_i},\dots z_n) \Leftrightarrow \frac{1}{z_i - z_i'} = \sum_j \frac{1}{z_i - \alpha_j} - \sum_{j\not=i} \frac{1}{z_i - z_j}
    \label{eqn:field}
  \end{equation}
  \label{lem:field}
\end{lemma}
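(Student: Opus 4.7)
The plan is to prove the equivalence by computing the logarithmic derivative of $q_i$ at $z_i$ and recognizing it as the right hand side of \eqref{eqn:field}.

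First I would rewrite $q_i$ as an explicit ratio of products of linear factors: since $p(z) = \prod_j (z - \alpha_j)$, we have
\[q_i(z) = \frac{\prod_j (z-\alpha_j)}{\prod_{j\neq i}(z - z_j)}.\]
Working formally with logarithms (or, equivalently, applying the standard product rule for the derivative of a product and using linearity of $f \mapsto f'/f$), the logarithmic derivative splits as
\[\frac{q_i'(z)}{q_i(z)} = \sum_j \frac{1}{z - \alpha_j} - \sum_{j\neq i}\frac{1}{z - z_j}.\]
Evaluating this identity at $z = z_i$ yields exactly the right hand side of \eqref{eqn:field}.

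Next I would translate the defining formula \eqref{def:eastep} into the left hand side. By definition, $z_i' = z_i - q_i(z_i)/q_i'(z_i)$, so $z_i - z_i' = q_i(z_i)/q_i'(z_i)$, and taking reciprocals gives $1/(z_i - z_i') = q_i'(z_i)/q_i(z_i)$. Combined with the previous computation, this produces the claimed equivalence.

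Finally I would clarify the word ``generically'': the formula \eqref{def:eastep} requires that $q_i(z_i)$ and $q_i'(z_i)$ be defined and non-zero, which rules out the hypersurfaces where $z_i = z_j$ for some $j\neq i$ (pole of $q_i$), where $z_i = \alpha_j$ (zero of $q_i$, so that the reciprocal on the right hand side fails to make sense), and where $q_i'(z_i) = 0$ (so that $z_i' = \infty$). On the Zariski open complement of these loci both sides of \eqref{eqn:field} are well-defined rational functions, and the above computation shows they agree. There is no real obstacle; the only thing to be careful about is bookkeeping the excluded loci so that the ``generic'' hypothesis is honest.
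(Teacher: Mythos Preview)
Your proof is correct and follows essentially the same route as the paper: both compute the logarithmic derivative of $q_i$ as $\sum_j \frac{1}{z-\alpha_j} - \sum_{j\neq i}\frac{1}{z-z_j}$ and then invert the relation $z_i - z_i' = q_i(z_i)/q_i'(z_i)$. Your added bookkeeping of the excluded loci is a welcome elaboration of what the paper leaves implicit under the word ``generically.''
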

\begin{proof}
  Note that $q_i / q'_i$ is the inverse of the logarithmic derivative of $q_i$, so 
  \[
    q_i(z_i) / q'_i(z_i) = \frac{1}{\sum_j \frac{1}{z_i - \alpha_j} - \sum_{j\not=i} \frac{1}{z_i - z_j}}\text{.}
  \]
  The claim now follows from elementary algebra.
\end{proof}
Without the second sum in the right hand side of \eqref{eqn:field}, we would obtain the update step for the classical Newton method. We can think of the Ehrlich--Aberth method as a correction of the Newton method, where we make add a term to avoid that coordinates converge to the same root. See Aberth's derivation~\cite{Aberth1973} for an electrostatic interpretation of Lemma~\ref{lem:field}.
\begin{lemma}[Möbius invariance]
  Let $M(z) = \frac{az+b}{cz+d}$ be a Möbius transformation, let $p(z) = \prod_i (z - \alpha_i)$,
  let $\tilde{p}(z) = \prod_i (z - M(\alpha_i))$,
  then $M(\EA_p(z_i;z_1,\dots, \hat{z_i},\dots,z_n)) = \EA_{\tilde{p}}(M(z_i);M(z_1),\dots, \hat{M(z_i)},\dots M(z_n))$
as rational functions in $z_i;z_1,\dots, \hat{z_i},\dots, z_n$.
  \label{lem:moebinv}
\end{lemma}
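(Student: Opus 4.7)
The plan is to reduce Möbius invariance to the field-theoretic characterization in Lemma~\ref{lem:field}. Write $M(z) = (az+b)/(cz+d)$ and set $\Delta := ad-bc \neq 0$. The elementary identity driving everything is
\[
  \frac{1}{M(z)-M(w)} = \frac{(cz+d)(cw+d)}{\Delta\,(z-w)}.
\]
Since $\tilde{p}$ has roots $M(\alpha_j)$, Lemma~\ref{lem:field} applied to $\tilde{p}$ characterises $\EA_{\tilde{p}}(M(z_i);M(z_1),\dots,\hat{M(z_i)},\dots,M(z_n))$ as the unique $w$ satisfying
\[
  \frac{1}{M(z_i)-w} = \sum_j \frac{1}{M(z_i)-M(\alpha_j)} - \sum_{j\neq i}\frac{1}{M(z_i)-M(z_j)}.
\]
It therefore suffices to verify that $w = M(z_i')$ solves this, where $z_i' = \EA_p(z_i;z_1,\dots,\hat{z_i},\dots,z_n)$.

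Substituting the fundamental identity into every term and clearing the common factor $(cz_i+d)/\Delta$, the required identity reduces to
\[
  \frac{cz_i'+d}{z_i-z_i'} = \sum_j \frac{c\alpha_j+d}{z_i-\alpha_j} - \sum_{j\neq i}\frac{cz_j+d}{z_i-z_j}.
\]
I would then decompose each summand using $(cw+d)/(z_i-w) = -c + (cz_i+d)/(z_i-w)$. The constant contributions add to $-nc + (n-1)c = -c$, while the remaining pieces amount to $(cz_i+d)$ times the identity of Lemma~\ref{lem:field} for $p$, that is, $(cz_i+d)/(z_i-z_i')$. The sum is $-c + (cz_i+d)/(z_i-z_i') = (cz_i'+d)/(z_i-z_i')$, exactly the required left-hand side.

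The only subtle point is the counting: the sum over roots has $n$ terms while the sum over the other coordinates has $n-1$, so the mismatched $-c$ contributions do not cancel and leave precisely the constant needed to promote $(cz_i+d)$ to $(cz_i'+d)$. This asymmetry, which is the same one that turns Newton's method into Ehrlich--Aberth, is what forces the scheme to be Möbius equivariant; once Lemma~\ref{lem:field} is in hand there is no further genuine obstacle, and no separate treatment of the degenerate case $c=0$ is needed.
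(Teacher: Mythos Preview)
Your proof is correct. Both you and the paper reduce to the field interpretation of Lemma~\ref{lem:field}, but the paper then checks the equivalence only on a generating set of $\PSL(2,\C)$ (translations, scalings, and $z\mapsto z^{-1}$), whereas you handle an arbitrary $M$ in one stroke via the identity $1/(M(z)-M(w)) = (cz+d)(cw+d)/\bigl(\Delta\,(z-w)\bigr)$ and the decomposition $(cw+d)/(z_i-w) = -c + (cz_i+d)/(z_i-w)$. Your route is more uniform and makes the mechanism transparent: the $n$-versus-$(n{-}1)$ mismatch in the two sums is exactly what converts $(cz_i+d)$ into $(cz_i'+d)$ and hence what makes the step Möbius-equivariant rather than merely affine-equivariant. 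The paper's generator approach trades this single computation for three shorter ones, each individually trivial; either way the content is the same.
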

\begin{proof}
  By the previous lemma, it is enough to show that
  \[ 
  \frac{1}{z_i - z_i'} = \sum_j \frac{1}{z_i - \alpha_j} - \sum_{j\not=i} \frac{1}{z_i - z_j} \Leftrightarrow
  \frac{1}{M(z_i) - M(z_i')} = \sum_j \frac{1}{M(z_i) - M(\alpha_j)} - \sum_{j\not=i} \frac{1}{M(z_i) - M(z_j)}
\]
holds generically. This is a straightforward computation for a generating set of Möbius transformations such as translations, scalings and the involution $z \mapsto z^{-1}$.
\end{proof}
Motivated by the previous two lemmas, it makes sense to extend $\EA_p$ to a rational function $\EA_p \colon \PP^1 \times \left( \PP^1 \right)^{n-1} \rightarrow \PP^1$. One way to make this precise is to consider the ``universal Ehrlich--Aberth'' map: %
\begin{lemma}
  \label{lem:indet}
  The universal Ehrlich-Aberth map
  \begin{align*}
    \uEA \colon \PP^1 \times \left( \PP^1 \right)^{n-1} \times  \left( \PP^1 \right)^{n} &\dasharrow \PP^1 \\
    (z_1;z_2,\dots,z_n;\alpha_1,\dots,\alpha_n) &\mapsto EA_p (z_1;z_2,\dots,z_n) \text{ for } p(z) = \prod_i (z-\alpha_i)
  \end{align*}
  is Möbius equivariant. We have
  \begin{equation}
    \label{eqn:eainf}
    \uEA(\infty;w_2,\dots,w_n;\alpha_1,\dots,\alpha_n) = \sum^n_{i=1} \alpha_i - \sum^{n-1}_{j=1} w_j
  \end{equation}
  In particular, $\uEA(z_1;z_2,\dots,z_n;\alpha_1,\dots,\alpha_n) = z_1$ if $z_1$ is equal to exactly one of
  $z_2,\dots,z_n;\alpha_1,\dots,\alpha_n$, and the indeterminacy locus of $\uEA$ is contained in the
  union of intersections of two diagonals of the form $z_1 = z_j$ or $z_1 = \alpha_i$.
\end{lemma}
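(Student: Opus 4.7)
All three assertions of the lemma follow from the field formula of Lemma~\ref{lem:field} combined with the single-polynomial M\"obius invariance of Lemma~\ref{lem:moebinv}. I will treat them in order.

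For the universal M\"obius equivariance, I observe that, with the roots frozen, $\uEA(\,\cdot\,;\,\cdot\,;\alpha_1,\dots,\alpha_n)$ is by definition $\EA_p$ for $p(z)=\prod_i(z-\alpha_i)$, and the polynomial $\tilde p$ in Lemma~\ref{lem:moebinv} is exactly the polynomial whose roots are $M(\alpha_i)$. The diagonal action of $M$ on all $2n$ input coordinates therefore reproduces precisely the equivariance already established in Lemma~\ref{lem:moebinv}, so there is nothing further to do at this step.

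For \eqref{eqn:eainf}, I would expand the right-hand side of \eqref{eqn:field} asymptotically in $1/z_1$. Each of the $n$ summands $\frac{1}{z_1-\alpha_j}$ and each of the $n-1$ summands $\frac{1}{z_1-w_j}$ contributes $\frac{1}{z_1}$ at leading order, so the difference begins $\frac{1}{z_1} + \frac{\sum_i \alpha_i - \sum_j w_j}{z_1^{2}} + O(z_1^{-3})$. Taking the reciprocal, expanding the geometric series, and subtracting from $z_1$ gives $z_1' = \sum_i \alpha_i - \sum_j w_j + O(1/z_1)$, which yields \eqref{eqn:eainf} in the limit $z_1\to\infty$. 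This short expansion is the calculational crux of the lemma; everything else is structural.

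For the indeterminacy description, suppose first that all coordinates are finite and that $z_1$ coincides with exactly one element of $\{w_2,\dots,w_n,\alpha_1,\dots,\alpha_n\}$. Then precisely one summand on the right of \eqref{eqn:field} has a pole, with no cancelling partner, so the right-hand side is $\infty$; thus $z_1-z_1'=0$, i.e.\ $z_1'=z_1$, and the map is defined at that point with this value. Consequently indeterminacy can occur only where at least two summands blow up and can cancel, which is exactly the intersection locus of two diagonals of the form $z_1=w_j$ or $z_1=\alpha_i$. The remaining cases in which one or more coordinates are $\infty$ — in particular $z_1=\infty$ meeting some $w_j=\infty$ or $\alpha_i=\infty$ — are reduced to the finite picture by transporting via the universal M\"obius equivariance along the involution $z\mapsto 1/z$. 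I expect this reduction across $\infty$, rather than the algebra itself, to be the only step that demands care.
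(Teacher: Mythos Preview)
Your argument is correct and tracks the paper's proof closely in spirit, with one genuine methodological difference worth noting.

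For M\"obius equivariance and for the indeterminacy description you do essentially what the paper does: the first is a restatement of Lemma~\ref{lem:moebinv}, and the last is handled by observing that a single coincidence $z_1=w_j$ or $z_1=\alpha_i$ forces the value $z_1$, then invoking equivariance to cover the cases at $\infty$. The paper runs this last step from the other end---it reads off well-definedness directly from the explicit formula~\eqref{eqn:eainf} at $z_1=\infty$ (a single $\infty$ among the summands still gives a well-defined point of $\PP^1$) and then transports by equivariance---but the logic is the same.

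The real difference is in how you obtain~\eqref{eqn:eainf}. You compute the asymptotic expansion of the right-hand side of~\eqref{eqn:field} in powers of $1/z_1$ and read off the limit $z_1'\to\sum_i\alpha_i-\sum_j w_j$. The paper instead exploits the equivariance it has just established: applying $z\mapsto z^{-1}$ sends $z_1=\infty$ to $0$, where~\eqref{eqn:field} evaluates immediately to $-1/z_1' = -\sum_j\alpha_j + \sum_j w_j$, and inverting back gives~\eqref{eqn:eainf} without any series manipulation. Your route is slightly more computational but entirely self-contained; the paper's is shorter and illustrates that the M\"obius equivariance already encodes the behavior at infinity.
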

\begin{proof}
  Möbius equivariance is a reformulation of Lemma~\ref{lem:moebinv}.
  Equation~\eqref{eqn:eainf} then follows from equation~\eqref{eqn:field} by applying the Möbius transformation $z \mapsto z^{-1}$ for $z_1 = 0$.

  The right hand side of equation~\eqref{eqn:eainf} is well defined if at most one of the summands is equal to $\infty$.
  From this, and Möbius equivariance, the rest follows.
\end{proof}

One should note that we can now move some roots of the polynomials to $\infty$. In this case $\tilde{p}$ has lower degree then dimension $n$ of the dynamical space,
the formula \ref{def:eastep} still make sense in this case. 

\subsection{Periodic Points of Permutation Type}
For a polynomial $p$ of degree $\leq n$, we denote the Jacobi variant using the Ehrlich--Aberth step associated to $p$ by $\JEA_p$.
So $\JEA_p \colon \left( \PP^1 \right)^{n} \dashrightarrow \left( \PP^1 \right)^{n}$. 
By applying Lemma~\ref{lem:indet} componentwise, we see that the indeterminacy locus is contained in
the union of intersections of two diagonals of the form $z_i = z_j$ or $z_i = \alpha_j$.

  The easiest orbits to understand are the ones where one Jacobi step just
  permutes the coordinates. We will give an example with a nice geometric
  interpretation in the next section.
  However, we cannot hope to use them to find attracting cycles. Here is why:
  \begin{lemma}
    Let $p$ be a monic polynomial of degree $n$, let $z_1, \dots, z_n \in \PP^1$ be pairwise different.
    Assume none of the $z_i$ are roots of $p$ and that $\JEA_p(z_1, \dots , z_n) = \sigma_* (z_1, \dots , z_n)$ for some $\sigma \in \Sym(n)$.
    Then $\tr \TotalDiff{\sigma^{-1}_* \circ \JEA_p}{(z_1, \dots, z_n)} = -n$.
    In particular, the periodic orbit of $(z_{1}, \dots, z_n)$ is not attracting.
        \label{lem:trace}
  \end{lemma}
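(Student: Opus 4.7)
The plan is to compute the trace directly from the differential of $\JEA_p$, using the implicit description of the Ehrlich--Aberth step from Lemma~\ref{lem:field}. Writing $F_j$ for the $j$-th component of $\JEA_p$, the relation
\[
  \frac{1}{z_j - F_j(z)} = \sum_k \frac{1}{z_j - \alpha_k} - \sum_{k \neq j} \frac{1}{z_j - z_k}
\]
holds in a neighborhood of the given point; by Möbius invariance (Lemma~\ref{lem:moebinv}) we may reduce to a chart in which all $z_i$ and $\alpha_k$ are finite and no denominator vanishes. Since $(\sigma^{-1}_* \circ \JEA_p)_i = F_{\sigma(i)}$, and its value at the given point is $z_i$, the trace we want equals $\sum_i \partial F_{\sigma(i)}/\partial z_i$.

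A preliminary observation is that $\sigma$ has to be a derangement: if $\sigma(j) = j$, then $F_j(z) = z_j$ forces $q_j(z_j) = 0$, so $z_j$ would be a root of $p$, contradicting the hypothesis. Hence for every $i$ one has $\sigma(i) \neq i$, and only off-diagonal partial derivatives enter the trace.

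The main calculation is then to compute $\partial F_j/\partial z_i$ for $i \neq j$ by implicit differentiation of the displayed relation. Differentiating the left side in $z_i$ yields $(z_j - F_j)^{-2}\,\partial F_j/\partial z_i$. On the right, the only term depending on $z_i$ is $-1/(z_j - z_i)$, whose derivative is $-1/(z_j - z_i)^2$. Rearranging,
\[
  \frac{\partial F_j}{\partial z_i}(z) = -\frac{(z_j - F_j(z))^2}{(z_j - z_i)^2}.
\]
Specializing to $j = \sigma(i)$: at the fixed point $F_{\sigma(i)}(z) = z_i$, so $z_{\sigma(i)} - F_{\sigma(i)}(z) = z_{\sigma(i)} - z_i$, and the numerator and denominator cancel, giving $\partial F_{\sigma(i)}/\partial z_i = -1$. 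Summing over $i$ yields the trace $-n$.

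For the non-attracting statement, the differential has $n$ eigenvalues summing to $-n$; if all had modulus strictly less than $1$, the trace would have absolute value strictly less than $n$, a contradiction. The only technical care will be to verify that the implicit relation is regular at the given point, which follows from pairwise distinctness of the $z_i$ and from $z_i \neq \alpha_k$, and to get the signs right in the implicit differentiation; the computation itself is short and the derangement observation is what makes the diagonal entries evaluate so cleanly to $-1$.
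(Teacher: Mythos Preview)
Your proof is correct and follows essentially the same route as the paper: reduce via M\"obius invariance to a finite chart, observe that $\sigma$ must be a derangement, and then differentiate the field relation from Lemma~\ref{lem:field} to see that each diagonal entry of $\TotalDiff{\sigma^{-1}_* \circ \JEA_p}{(z_1,\dots,z_n)}$ equals $-1$. The only point the paper spells out that you leave implicit is the final step for ``not attracting'': the lemma concerns the periodic orbit under $\JEA_p$, not the fixed point of $\sigma^{-1}_* \circ \JEA_p$, so you should note (as the paper does) that by $S_n$-equivariance the differential of the first return map $\JEA_p^m$ at the point is the $m$-th power of $\TotalDiff{\sigma^{-1}_* \circ \JEA_p}{(z_1,\dots,z_n)}$, where $m$ is the order of $\sigma$; your trace bound then transfers.
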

  \begin{proof}
    Since we assume that $z_1 \dots z_n$ are pairwise different and none of the $z_i$ are roots of $p$, the permutation $\sigma$ does not have a fixed point.
    By Möbius invariance, we can also assume that all $z_i$ are in $\C$. We claim that the matrix $\TotalDiff{\sigma^{-1}_* \circ \JEA_p}{(z_1, \dots, z_n)}$
    in the basis $\frac{\partial}{\partial z_1},\dots,\frac{\partial}{\partial z_n}$ has all diagonal entries equal to $-1$.
    For this it is enough to show the following: if $\EA_p(z_1;z_2,\dots,z_n) = z_2$, then $\frac{\partial \EA_p(z_1;z_2,\dots,z_n)}{\partial z_2} = -1$. 
    This follows directly from Lemma~\ref{lem:field} by differentiating the field interpretation~\eqref{eqn:field} by $z_2$ and $z_1'$.

    In particular, the eigenvalues of $\TotalDiff{\sigma^{-1}_* \circ \JEA_p}{(z_1, \dots, z_n)}$
    can not all lie in $\D$. As the differential of the first return map for $(z_1, \dots, z_n)$ is a power of $\TotalDiff{\sigma^{-1}_* \circ \JEA_p}{(z_1, \dots, z_n)}$, the periodic orbit of $(z_{1}, \dots, z_n)$ is not attracting.
  \end{proof}
\subsection{Two-cycles via harmonic quadruples}
The idea in this section is to construct a number of periodic points of period 2 for generic degree 4 polynomials such that they have a nice description in terms of Möbius transformations.

We will use the following definition of the cross-ratio
\[\CR(a,b,c,d) = \frac{(a-c)(b-d)}{(a-d)(b-c)}\text{.}\]
The cross-ratio is well defined whenever $\left\{a,b,c,d\right\} \subset \PP^1$ has cardinality at least 3.
It is well known that the cross-ratio is invariant under Möbius transformations.
In our normalization $\CR(0,\infty,c,d) = c/d$.

For $a \not= b \in \PP^1$ let $M_{a,b} \in \PSL(2,\C)$ the unique Möbius involution which fixes $a$ and $b$.
It is easy to see that this is well-defined, for example $M_{0,\infty}(z) = -z$, so if $N \in \PSL(2,\C)$ moves
$a$ to $0$ and $b$ to $\infty$, then $M_{a,b} = N^{-1} \circ M_{0,\infty} \circ N$.
\begin{lemma}
  Let $a,  b, c, d \in \PP^1$ be pairwise different. 
  The following are equivalent:
  \begin{enumerate}
    \item The cross-ratio $\CR(a,b,c,d)$ is equal to $-1$.
      \label{cr}
    \item $M_{a,b}$ interchanges $c$ and $d$.
      \label{interchange}
    \item $M_{c,d}$ interchanges $a$ and $b$.
  \end{enumerate}
We say in this case
that $a,b,c,d$ are harmonic.
\label{lem:harmonic}
\end{lemma}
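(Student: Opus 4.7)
The plan is to exploit the Möbius invariance of everything in sight and reduce the problem to a simple normal form on $\PP^1$. Both the cross-ratio and the involutions $M_{a,b}$ transform equivariantly under $\PSL(2,\C)$: explicitly, if $N \in \PSL(2,\C)$, then $\CR(Na,Nb,Nc,Nd) = \CR(a,b,c,d)$ and $M_{Na,Nb} = N \circ M_{a,b} \circ N^{-1}$. In particular, ``$M_{a,b}$ interchanges $c$ and $d$'' is a Möbius-invariant statement. So I can pick $N \in \PSL(2,\C)$ with $N(a) = 0$ and $N(b) = \infty$ and work with $a=0$, $b=\infty$ and arbitrary distinct $c,d \in \C^*$, $c \neq d$.

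With this normalization $M_{a,b} = M_{0,\infty}$ is the map $z \mapsto -z$ as noted in the paragraph preceding the lemma, and $\CR(0,\infty,c,d) = c/d$. Then the equivalence of (\ref{cr}) and (\ref{interchange}) becomes the obvious statement that $-z$ interchanges $c$ and $d$ iff $c = -d$ iff $c/d = -1$. This handles (\ref{cr}) $\Leftrightarrow$ (\ref{interchange}).

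For the equivalence with the third condition, the cleanest route is the symmetry $\CR(a,b,c,d) = \CR(c,d,a,b)$, which is a direct computation:
\[
\CR(c,d,a,b) = \frac{(c-a)(d-b)}{(c-b)(d-a)} = \frac{(a-c)(b-d)}{(a-d)(b-c)} = \CR(a,b,c,d),
\]
after pulling out signs. Applying the already-proven (\ref{cr}) $\Leftrightarrow$ (\ref{interchange}) with the roles of $(a,b)$ and $(c,d)$ swapped then gives condition (3) as an equivalent form of $\CR(a,b,c,d) = -1$.

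There is no real obstacle here: the only thing to be careful about is keeping track that the normalization $N$ conjugates $M_{a,b}$ to $M_{0,\infty}$ correctly, and that the symmetry $\CR(a,b,c,d) = \CR(c,d,a,b)$ holds (it is one of the standard $S_4$-symmetries of the cross-ratio and is immediate from the formula).
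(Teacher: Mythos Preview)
Your argument is correct and follows essentially the same route as the paper: reduce to $a=0$, $b=\infty$ by M\"obius invariance to get (\ref{cr}) $\Leftrightarrow$ (\ref{interchange}), and then use the symmetry $\CR(a,b,c,d)=\CR(c,d,a,b)$ to obtain the equivalence with the third condition. If anything, you are slightly more explicit than the paper in verifying the cross-ratio symmetry and the conjugation behavior of $M_{a,b}$.
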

\begin{proof}
  Since $\CR(a,b,c,d) = \CR(c,d,a,b)$, it is enough to show that (\ref{cr}) and (\ref{interchange}) are equivalent.
  By Möbius invariance, we can assume wlog.\ that $a = 0$ and $b = \infty$. In this case $M_{0,\infty}(z) = -z$,
  and $\CR(0,\infty,c,d) = c/d$, so the statement is obvious.
\end{proof}
\begin{remark}
  If $a,b,c,d$ are harmonic, they must lie on a real circle or a real line. If $a,b,c,d$ lie on a
real line, this definition extends the classical notion of harmonic quadruples. It is
easy to check that this is really a property of the unordered pair of unordered
pairs $\left\{ \left\{ a,b \right\},\left\{ c,d \right\} \right\}$.

Since cross-ratios are invariant under Möbius transformations, so is the notion of harmonic quadruples.
  \label{rem:harmonic}
\end{remark}
\begin{lemma}
  Let $a,  b, c, d \in \PP^1$ be pairwise different. Then there is a unique unordered pair $\left\{ e,f \right\}$ such that $a,b$ is harmonic to $e,f$ and $c,d$ is harmonic to $e,f$.  \label{lem:harmonicint}
\end{lemma}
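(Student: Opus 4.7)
The plan is to reduce the statement to a short explicit computation by exploiting Möbius invariance of the harmonic condition (Remark~\ref{rem:harmonic}). I would choose a Möbius transformation sending $a$ to $0$ and $b$ to $\infty$; then $M_{a,b}$ becomes the involution $z \mapsto -z$. By Lemma~\ref{lem:harmonic}, the condition that $\{e,f\}$ is harmonic to $\{a,b\}$ is precisely $M_{a,b}(e) = f$, which in the normalized picture says $f = -e$. So any candidate pair is forced to be of the form $\{e, -e\}$ with $e \in \C \setminus \{0, \infty\}$, and the distinctness of $a, b, c, d$ translates into $c, d \in \C \setminus \{0, \infty\}$ with $c \neq d$.

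Next, I would substitute $f = -e$ into the second harmonic condition $\CR(c, d, e, -e) = -1$. Using the cross-ratio convention of the paper this reads
\[
  \frac{(c-e)(d+e)}{(c+e)(d-e)} = -1,
\]
and a one-line expansion collapses to $e^2 = cd$. Thus $e$ is determined up to sign, and so the unordered pair $\{e, -e\} = \{\sqrt{cd}, -\sqrt{cd}\}$ is uniquely determined by $c$ and $d$, giving both existence and uniqueness at once.

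The only remaining issue is to confirm that this pair really does provide two honest harmonic quadruples, i.e., that the six points $a, b, c, d, e, f$ involved in the two applications of Lemma~\ref{lem:harmonic} are pairwise distinct. Since $cd \notin \{0, \infty\}$ we have $e \neq 0, \infty$ and $e \neq -e$, so $\{e, f\}$ is disjoint from $\{a, b\} = \{0, \infty\}$. And if $e = \pm c$ (resp.\ $e = \pm d$), squaring yields $cd = c^2$ (resp.\ $cd = d^2$), forcing $c = d$, which is excluded. Undoing the normalizing Möbius transformation completes the proof.

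No step here is a serious obstacle: the main subtlety is simply recognizing that Möbius normalization turns the first harmonic condition into the rigid relation $f = -e$, after which the second condition is a single polynomial equation in one unknown.
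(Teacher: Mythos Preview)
Your proof is correct. The paper takes a different, coordinate-free route: it observes (via criterion~(\ref{interchange}) of Lemma~\ref{lem:harmonic}) that $\{e,f\}$ is harmonic to both $\{a,b\}$ and $\{c,d\}$ precisely when each of the involutions $M_{a,b}$ and $M_{c,d}$ swaps $e$ and $f$, so that $e$ and $f$ are exactly the two fixed points of the composition $M_{a,b}\circ M_{c,d}$. Your normalization $a=0$, $b=\infty$ is in effect computing these fixed points explicitly: in your chart $M_{a,b}\circ M_{c,d}(z)=\dfrac{2cd-(c+d)z}{2z-(c+d)}$, whose fixed-point equation is $z^2=cd$, recovering your pair $\{\pm\sqrt{cd}\}$. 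The paper's argument is shorter and gives a description of $\{e,f\}$ without choosing coordinates; your approach is more elementary, produces an explicit formula in the normalized chart, and makes the distinctness checks (that the paper folds into ``straightforward to verify'') completely explicit.
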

\begin{proof}
  Using criterion (\ref{interchange}) of Lemma~\ref{lem:harmonic}, it is straightforward to verify that $e,f$ are the two fixed points of $M_{a,b} \circ M_{c,d}$.
\end{proof}
\begin{proposition}
  Let $p(z) = \prod^4_{i=1}(z-\alpha_i)$ be a polynomial with distinct roots. Let $(z_1,z_2,z_3,z_4)$ be such that
  \begin{itemize}
    \item
      $\left\{ z_1,z_2 \right\}$ is the unique
      unordered pair harmonic to $\left\{ \alpha_1, \alpha_2 \right\}$ and $\left\{ \alpha_3, \alpha_4 \right\}$,
     \item $\left\{ z_3, z_4 \right\}$
      is the unique unordered pair harmonic to $\left\{ \alpha_1, \alpha_3 \right\}$ and $\left\{ \alpha_2, \alpha_4 \right\}$. 
  \end{itemize}
  Then under the Jacobi variant of the Ehrlich--Aberth method, $(z_1, z_2, z_3, z_4)$ is sent to $(z_2, z_1, z_4, z_3)$.
  \label{thm:geometricintr}
\end{proposition}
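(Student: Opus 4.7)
The plan is to use Möbius equivariance (Lemma~\ref{lem:moebinv}) to reduce to a simple normal form, and then to verify the four required identities $\EA_p(z_i;\dots) = z_{\sigma(i)}$ directly from the field interpretation of Lemma~\ref{lem:field}. Since harmonic quadruples are Möbius invariant (Remark~\ref{rem:harmonic}) and $\JEA$ is Möbius equivariant, I can apply a Möbius transformation sending $z_1 \mapsto 0$, $z_2 \mapsto \infty$, and $\alpha_1 \mapsto 1$. Since $M_{0,\infty}(z) = -z$, Lemma~\ref{lem:harmonic} forces this involution to swap $\alpha_1 \leftrightarrow \alpha_2$ and $\alpha_3 \leftrightarrow \alpha_4$, so $\alpha_2 = -1$ and $\alpha_4 = -\alpha_3$. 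Writing $b := \alpha_3$, this gives $p(z) = (z^2-1)(z^2-b^2)$.

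Next I would determine $\{z_3, z_4\}$ explicitly. Using the cross-ratio conditions $\CR(1,b,z_3,z_4) = -1$ and $\CR(-1,-b,z_3,z_4) = -1$ (equivalently, by Lemma~\ref{lem:harmonicint}, taking the two fixed points of $M_{1,b} \circ M_{-1,-b}$), a short elimination gives $z_3 + z_4 = 0$ and $z_3 z_4 = -b$, hence $\{z_3, z_4\} = \{\sqrt{b}, -\sqrt{b}\}$ for some choice of square root.

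It then remains to verify
\[
\JEA_p(0, \infty, \sqrt{b}, -\sqrt{b}) = (\infty, 0, -\sqrt{b}, \sqrt{b}).
\]
For the first coordinate, Lemma~\ref{lem:field} requires $1/(0 - \infty) = 0$ to match the right hand side at $z_1 = 0$; this vanishes because the $\alpha_j$ come in $\pm$-pairs and $1/\sqrt{b} + 1/(-\sqrt{b}) = 0$. For the second, formula~\eqref{eqn:eainf} yields $\sum_i \alpha_i - (0 + \sqrt{b} - \sqrt{b}) = 0$. For the third, a short partial-fraction calculation collapses the right hand side of \eqref{eqn:field} at $z_1 = \sqrt{b}$ to $1/(2\sqrt{b}) = 1/(\sqrt{b} - (-\sqrt{b}))$. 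The fourth equality follows from the third by the symmetry $z \mapsto -z$, which preserves $p$ and sends the configuration to itself up to swapping $z_3, z_4$, combined with Möbius equivariance. There is no conceptual obstacle beyond the bookkeeping of $\infty$ contributions in \eqref{eqn:field}, which are meaningful by Lemma~\ref{lem:indet}; once normalized, everything reduces to elementary partial-fraction manipulation in the single parameter $b$.
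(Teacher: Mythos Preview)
Your proof is correct and follows the same overall strategy as the paper (M\"obius normalization followed by direct verification), but the paper streamlines it in two ways worth noting.

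First, the paper observes that by the symmetry of the hypotheses (relabelling roots swaps the roles of $\{z_1,z_2\}$ and $\{z_3,z_4\}$, and within each pair the order is irrelevant), it suffices to check a single coordinate, namely $\EA_p(z_1;z_2,z_3,z_4)=z_2$. You instead fix one normalization and verify all four updates; this is fine, but three of your checks are redundant once the symmetry is noted.

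Second, the paper normalizes so that the coordinate being updated is sent to $\infty$ (rather than $0$). Then equation~\eqref{eqn:eainf} applies immediately and gives $\EA_p(\infty;0,z_3,z_4)=\sum_i\alpha_i - z_3 - z_4$, so one only needs $\sum_i\alpha_i=0$ and $z_3+z_4=0$. Both follow from the observation that $M_{0,\infty}(z)=-z$ swaps $\alpha_1\leftrightarrow\alpha_2$, $\alpha_3\leftrightarrow\alpha_4$, and hence also $z_3\leftrightarrow z_4$ (since it preserves the pair of pairs $\{\{\alpha_1,\alpha_3\},\{\alpha_2,\alpha_4\}\}$). This avoids ever solving for $z_3,z_4$ explicitly; your route via $z_3 z_4=-b$, $z_3+z_4=0$ and the partial-fraction check at $\sqrt b$ reaches the same conclusion with a bit more computation.
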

\begin{proof}
  By symmetry, it is enough to show that $\EA_p(z_1;z_2,z_3,z_4) = z_2$. By Möbius invariance, we can assume that $z_1 = \infty$, $z_2 = 0$, and
  in this case $\EA_p(\infty; 0;z_3,z_4) = (\sum^4_{i=1} \alpha_i) - z_3 - z_4$. Now $\left\{ \infty, 0 \right\}$ is harmonic to $\left\{ \alpha_1, \alpha_2 \right\}$ and harmonic to $\left\{ \alpha_3, \alpha_4 \right\}$, so $M_{0,\infty}(z) = -z$ both interchanges $\alpha_1$ and $\alpha_2$ as well as $\alpha_3$ and $\alpha_4$. Hence $M_{0,\infty}(z)$ fixes the unordered pair of unordered pairs $\left\{ \left\{ \alpha_1, \alpha_3 \right\}, \left\{ \alpha_2, \alpha_2 \right\}  \right\}$ set-wise, so  $M_{0,\infty}$ must also interchange $z_3$ and $z_4$. So $\alpha_1 = -\alpha_2, \alpha_3 = - \alpha_4, z_3 = -z_4$ and thus
  $\EA_p(\infty; 0,z_3,z_4) = 0$.
\end{proof}
\begin{remark}
  Using this proposition, we can produce $\binom{4}{2}\cdot2\cdot2 = 24$ periodic points for the Jacobi variant of the Ehrlich--Aberth method with mapping behavior
  $(z_1, z_2, z_3, z_4) \mapsto (z_2, z_1, z_4, z_3)$. These are far from the only ones, computer algebra shows that there are generically 72 such periodic points. We are not aware of a description of the 48 remaining ones that is as nice as given in this theorem.
\end{remark}
\subsection{Diverging Orbits for Jacobi Ehrlich--Aberth}
By Proposition~\ref{thm:geometricintr}, we have a nice class of periodic orbits for the Jacobi Ehrlich--Aberth method. Here is a particularly nice parametrization:
\begin{lemma}
  Let $\lambda \in \C \setminus \left\{ -1, 0, 1 \right\}$. For $p_\lambda(z) = (z^2-1)(z^2-\lambda^4)$, the points $(\lambda,-\lambda,\sqrt{-1}\lambda,-\sqrt{-1}\lambda)$
  and $(\lambda,-\lambda,0,\infty)$ are periodic points of period 2 with mapping scheme $(z_1, z_2, z_3, z_4) \mapsto (z_2, z_1, z_4, z_3)$.
  \label{lem:param}
\end{lemma}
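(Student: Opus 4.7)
The plan is to apply Proposition~\ref{thm:geometricintr} to each of the two points, after choosing, for each, a suitable labeling of the four roots $\pm 1, \pm \lambda^2$ of $p_\lambda$. Since $p_\lambda$ depends on its roots only symmetrically, the dynamics of $\JEA_{p_\lambda}$ is independent of this labeling; only the harmonic conditions appearing in the hypothesis of the proposition are affected by it.

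The computational workhorse is the explicit formula $M_{a,-a}(z) = a^2/z$ for the Möbius involution fixing $\pm a$, valid for every $a \in \C\setminus\{0\}$: the map is manifestly an involution in $\PSL(2,\C)$ with fixed points exactly $\pm a$, so by uniqueness it equals $M_{a,-a}$. Together with $M_{0,\infty}(z) = -z$, which has already been recorded, this supplies the three involutions I need. Namely, $M_{\lambda,-\lambda}(z) = \lambda^2/z$ swaps $1 \leftrightarrow \lambda^2$ and $-1 \leftrightarrow -\lambda^2$; $M_{\sqrt{-1}\lambda,-\sqrt{-1}\lambda}(z) = -\lambda^2/z$ swaps $1 \leftrightarrow -\lambda^2$ and $\lambda^2 \leftrightarrow -1$; and $M_{0,\infty}$ swaps $1 \leftrightarrow -1$ and $\lambda^2 \leftrightarrow -\lambda^2$. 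By criterion (\ref{interchange}) of Lemma~\ref{lem:harmonic}, each swap is exactly a harmonic relation between the two relevant pairs.

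For $(\lambda, -\lambda, \sqrt{-1}\lambda, -\sqrt{-1}\lambda)$ I would set $\alpha_1 = 1,\ \alpha_2 = \lambda^2,\ \alpha_3 = -\lambda^2,\ \alpha_4 = -1$: the first involution then shows $\{\lambda,-\lambda\}$ harmonic to $\{\alpha_1,\alpha_2\}$ and $\{\alpha_3,\alpha_4\}$, while the second shows $\{\sqrt{-1}\lambda,-\sqrt{-1}\lambda\}$ harmonic to $\{\alpha_1,\alpha_3\}$ and $\{\alpha_2,\alpha_4\}$. For $(\lambda, -\lambda, 0, \infty)$ I would instead set $\alpha_1 = 1,\ \alpha_2 = \lambda^2,\ \alpha_3 = -1,\ \alpha_4 = -\lambda^2$; the verification for $\{\lambda,-\lambda\}$ is identical, and the third involution shows $\{0,\infty\}$ harmonic to $\{\alpha_1,\alpha_3\} = \{1,-1\}$ and $\{\alpha_2,\alpha_4\} = \{\lambda^2,-\lambda^2\}$. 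Proposition~\ref{thm:geometricintr} then yields the asserted mapping $(z_1,z_2,z_3,z_4) \mapsto (z_2,z_1,z_4,z_3)$, whose square is the identity, so the orbit has period dividing two; it is exactly two because $z_1 \neq z_2$ under the hypothesis $\lambda \notin \{-1,0,1\}$.

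The only genuine difficulty is combinatorial: one must find, for each point, a \emph{single} labeling of the four roots that realizes both required harmonic pairings at once. Once the correct labeling is in place, every individual harmonicity check collapses to the observation that one of the three elementary involutions listed above permutes the relevant pair of roots, and no serious computation is needed.
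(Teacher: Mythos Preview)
Your proof is correct and follows essentially the same approach as the paper: verify the required harmonic relations via Lemma~\ref{lem:harmonic} and then invoke Proposition~\ref{thm:geometricintr}. Your treatment is in fact more explicit than the paper's, supplying the closed form $M_{a,-a}(z)=a^2/z$ and spelling out the two different labelings of the roots needed for the two points, while the paper simply asserts the harmonicities as ``straight forward computations'' and leaves the labeling implicit.
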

\begin{proof}
  Straight forward computations using Lemma~\ref{lem:harmonic} show that
  $\left\{ \lambda, -\lambda \right\}$ is harmonic to $\left\{ 1,\lambda^2
  \right\}$ and harmonic to $\left\{ -1,-\lambda^2 \right\}$; that $\left\{
  \sqrt{-1}\lambda, -\sqrt{-1}\lambda \right\}$ is harmonic to $\left\{ -1,
  \lambda^2 \right\}$ and harmonic to $\left\{ 1, -\lambda^2 \right\}$; and
  that $\left\{ 0,\infty \right\}$ is harmonic to $\left\{ 1,-1 \right\}$ and harmonic to $\left\{ \lambda^2,-\lambda^2 \right\}$.
  The claim now follows from Proposition~\ref{thm:geometricintr}.
\end{proof}
We can explicitly compute the first return maps for these periodic points. 
\begin{proposition}[Two-cycles via harmonic configurations]
  Let $p$ be a polynomial of degree $4$ such that its roots form a parallelogram $\alpha_1,\dots,\alpha_4$.
  Then the Jacobi Ehrlich--Aberth method of degree $4$ for $p$ has a two-cycle in $(\PP^1)^4 \setminus \C^4$. Moreover, there is 
  a nonempty open subset $U \subset \C$ such that if $\CR(\alpha_1,\alpha_2,\alpha_3,\alpha_4) \in U$, then this cycle has a stable manifold intersecting $\C^4$, hence the Jacobi Ehrlich--Aberth method has orbits diverging off to infinity.
  \label{thm:escorbits}
\end{proposition}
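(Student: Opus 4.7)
The plan is: reduce to a one-parameter family using Möbius equivariance, exhibit the cycle via Lemma~\ref{lem:param}, then compute the differential of the first-return map at that cycle and extract a stable direction transverse to the divisor at infinity.

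\textbf{Normalization and the cycle.} By Lemma~\ref{lem:moebinv}, an affine change of coordinates centering the parallelogram at $0$ and rescaling reduces $p$ to $p_\lambda(z) = (z^2-1)(z^2-\lambda^4)$ with $\lambda \in \C \setminus \{-1,0,1\}$ encoding the shape of the parallelogram. Lemma~\ref{lem:param} then exhibits the period-$2$ point $P_\lambda := (\lambda,-\lambda,0,\infty) \in (\PP^1)^4 \setminus \C^4$ with mapping scheme $\sigma := (1\,2)(3\,4)$, proving the first assertion. The cross-ratio $\CR(1,-1,\lambda^2,-\lambda^2) = (1-\lambda^2)^2/(1+\lambda^2)^2$ is a non-constant rational function of $\lambda$, so any open set in the $\lambda$-parameter corresponds to an open set of cross-ratios.

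\textbf{The first-return differential.} Using $S_4$-equivariance of $\JEA_{p_\lambda}$ and $\sigma^2 = \mathrm{id}$, one has $\JEA_{p_\lambda}^2 = (\sigma_*^{-1} \circ \JEA_{p_\lambda})^2$, and $P_\lambda$ is a fixed point of $\sigma_*^{-1} \circ \JEA_{p_\lambda}$; hence the first-return differential at $P_\lambda$ equals $A_\lambda^2$, where $A_\lambda := D(\sigma_*^{-1} \circ \JEA_{p_\lambda})(P_\lambda)$. I would compute $A_\lambda$ in the chart $(z_1,z_2,z_3,w_4)$ with $w_4 = 1/z_4$: the first three components are obtained by implicitly differentiating the field formula~\eqref{eqn:field}, while the fourth uses~\eqref{eqn:eainf}, which for $p_\lambda$ reduces to $\EA_{p_\lambda}(\infty;z_1,z_2,z_3) = -(z_1+z_2+z_3)$ since the roots sum to zero. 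Lemma~\ref{lem:trace} (applied after a preliminary Möbius move of $\infty$ to $\C$) gives $\tr A_\lambda = -4$ as a sanity check.

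\textbf{Finding $U$ and diverging orbits.} The key remaining task is to exhibit a non-empty open set of $\lambda$ for which $A_\lambda^2$ has an eigenvalue of modulus $< 1$ whose eigenvector is not tangent to the divisor $\{z_3 = \infty\}\cup\{z_4 = \infty\}$ at $P_\lambda$. The strategy is to exploit the $z\mapsto -z$ symmetry of $p_\lambda$: it acts on coordinates by the transposition $(1\,2)$, fixes $P_\lambda$ setwise, and commutes with $\sigma$, hence block-diagonalizes $A_\lambda$ into smaller blocks whose eigenvalues are explicit rational functions of $\lambda$. Evaluating at a convenient $\lambda_0$ (e.g.\ a small real value) should display a contracting eigenvalue with an eigenvector having a nonzero $w_4$-component, so the stable manifold theorem produces a local holomorphic stable manifold through $P_\lambda$ transverse to $\{w_4 = 0\}$; openness of the conclusion in $\lambda$ follows from continuity of eigenvalues, and transversality guarantees that the stable manifold intersects $\C^4$, yielding orbits diverging to infinity. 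The main obstacle is the explicit eigenvalue analysis, which, as the paper's acknowledgements indicate, requires computer algebra.
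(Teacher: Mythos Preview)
Your outline mirrors the paper's proof: normalize to $p_\lambda$, invoke Lemma~\ref{lem:param} for the cycle $(\lambda,-\lambda,0,\infty)$, compute the differential $A_\lambda$ of $\sigma_*^{-1}\circ\JEA_{p_\lambda}$ at that point, and apply the stable manifold theorem once a contracting eigenvalue with eigenvector transverse to $\{w_4=0\}$ is found. Your use of the involution ``$z\mapsto -z$ composed with $(1\,2)$'' is a nice observation the paper does not make explicit; it indeed commutes with $A_\lambda$ and produces a $1\oplus 3$ block splitting (the paper's first eigenvector $(1,-1,0,0)$ spans the $1$-dimensional block).

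The concrete gap is your suggested test parameter. The paper computes $A_\lambda$ explicitly (equation~\eqref{eqn:jacobian}) and finds that the eigenvalue attached to the transverse eigenvector $(0,0,1,-1/\lambda^2)$ is $(2\lambda^4-3\lambda^2+2)/\lambda^2 = 2\lambda^2 + 2/\lambda^2 - 3$. For real $\lambda\neq 0$ this is $\geq 4-3 = 1$ by AM--GM (with equality only at the excluded $\lambda=\pm 1$); a direct check shows the remaining three eigenvalues are likewise outside $\overline\D$ for real $\lambda$. So ``a small real value'' does not produce a contracting direction. The paper instead solves $2\lambda^4-3\lambda^2+2=0$, giving $\lambda^2 = (3\pm\sqrt{-7})/4$, where this eigenvalue vanishes and the other three are $-63$, $61.25\ldots$, $-2.25\ldots$; a perturbation of this \emph{complex} $\lambda$ then yields the open set. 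Locating such a $\lambda$ is exactly the step that requires the computer-algebra work you defer, and your heuristic guess for it is in the wrong regime.
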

\begin{proof}
  The statement of the proposition is invariant under affine Möbius
  transformations. We can move any parallelogram to
  $(1,\lambda^2,-1,-\lambda^2)$ by an affine Möbius transformation, so it is
  enough to consider $p_\lambda(z) = (z^2-1)(z^2-\lambda^4)$ as in
  Lemma~\ref{lem:param}. In this case, we already know that
  $(\lambda,-\lambda,0,\infty)$ is part of a two-cycle in $(\PP^1)^4 \setminus
  \C^4$, so it remains to show for some open set $V \subset \C$, this cycle has
  a stable manifold intersecting $\C^4$ for $\lambda \in V$. The set $U$ will
  be the image of $V$ under $\lambda \mapsto CR(1,\lambda^2,-1,-\lambda^2)$.

  Let $K \colon (\PP^1)^4 \mapsto (\PP^1)^4$ be the permutation $(z_1, z_2, z_3, z_4) \mapsto (z_2, z_1, z_4, z_3)$. Then $(\lambda,-\lambda,0,\infty)$ is a fixed point of $K \circ \EA_{p_{\lambda}}$. We take as basis for the tangent space at the point $\partial z_1, \partial z_2, \partial z_3, z^{-2}_4 \partial z_4$, where $\partial z_i$ is the extension from $\C^4$ to $(\PP^1)^4$ of the constant vector field in direction of the $i$-th coordinate.
  In this basis, the differential $D(K \circ \EA_{p_{\lambda}})$ at $(\lambda,-\lambda,0,\infty)$ is given by
  \begin{equation}
    \left(\begin{array}{rrrr}
        -1 & -\frac{2 \, {\left({\lambda}^{4} + 14 \, {\lambda}^{2} + 1\right)}}{{\left({\lambda} + 1\right)}^{2} {\left({\lambda} - 1\right)}^{2}} & -4 & -4 \, {\lambda}^{2} \\
        -\frac{2 \, {\left({\lambda}^{4} + 14 \, {\lambda}^{2} + 1\right)}}{{\left({\lambda} + 1\right)}^{2} {\left({\lambda} - 1\right)}^{2}} & -1 & -4 & -4 \, {\lambda}^{2} \\
        -1 & -1 & -1 & -2 \, {\lambda}^{4} + 2 \, {\lambda}^{2} - 2 \\
        -\frac{1}{{\lambda}^{2}} & -\frac{1}{{\lambda}^{2}} & -\frac{2 \, {\left({\lambda}^{4} - {\lambda}^{2} + 1\right)}}{{\lambda}^{4}} & -1
    \end{array}\right)\text{.}
    \label{eqn:jacobian}
  \end{equation}

We use Sage to compute an explicit eigenspace decomposition:
we give a eigenspace decomposition of the matrix~\eqref{eqn:jacobian}, in dependence of $\lambda$.
Let $D = {{\lambda}^{16} - 8 \, {\lambda}^{14} + 12 \, {\lambda}^{12}
                + 8 \, {\lambda}^{10} + 230 \, {\lambda}^{8} + 8 \, {\lambda}^{6} + 12 \, {\lambda}^{4}
              - 8 \, {\lambda}^{2} + 1}$.
  This matrix has eigenvalues
  \begin{equation}
    \frac{{\lambda}^{4} + 30 \, {\lambda}^{2} + 1}{{\lambda}^{4} - 2 \, {\lambda}^{2} + 1}, -\frac{{\lambda}^{8} - {\lambda}^{6} + 16 \, {\lambda}^{4} - {\lambda}^{2} + \sqrt{D} + 1}{{\lambda}^{6} - 2 \, {\lambda}^{4} + {\lambda}^{2}}, -\frac{{\lambda}^{8} - {\lambda}^{6} + 16 \, {\lambda}^{4} - {\lambda}^{2} - \sqrt{D} + 1}{{\lambda}^{6} - 2 \, {\lambda}^{4} + {\lambda}^{2}}, \frac{2 \, {\lambda}^{4} - 3 \, {\lambda}^{2} + 2}{{\lambda}^{2}}
  \end{equation}
  with right eigenvectors given by the columns of the following matrix:
  \begin{equation}
\left(\begin{array}{rrrr}
1 & 1 & 1 & 0 \\
-1 & 1 & 1 & 0 \\
0 & \frac{{\lambda}^{8} - 4 \, {\lambda}^{6} - 10 \, {\lambda}^{4} - 4 \, {\lambda}^{2} + \sqrt{D} + 1}{8 \, {\left({\lambda}^{6} - 2 \, {\lambda}^{4} + {\lambda}^{2}\right)}} & \frac{{\lambda}^{8} - 4 \, {\lambda}^{6} - 10 \, {\lambda}^{4} - 4 \, {\lambda}^{2} - \sqrt{D} + 1}{8 \, {\left({\lambda}^{6} - 2 \, {\lambda}^{4} + {\lambda}^{2}\right)}} & 1 \\
0 & \frac{{\lambda}^{8} - 4 \, {\lambda}^{6} - 10 \, {\lambda}^{4} - 4 \, {\lambda}^{2} + \sqrt{D} + 1}{8 \, {\left({\lambda}^{8} - 2 \, {\lambda}^{6} + {\lambda}^{4}\right)}} & \frac{{\lambda}^{8} - 4 \, {\lambda}^{6} - 10 \, {\lambda}^{4} - 4 \, {\lambda}^{2} - \sqrt{D} + 1}{8 \, {\left({\lambda}^{8} - 2 \, {\lambda}^{6} + {\lambda}^{4}\right)}} & -\frac{1}{{\lambda}^{2}}
\end{array}\right)
  \end{equation}
  For $\lambda = \pm\frac{1}{2} \, \sqrt{\pm\sqrt{-7} + 3}$, we find that the eigenvalues
  are $-63, 61.2529526\dots, -2.2529526\dots$ and $0$. By small (but nonzero) perturbation of $\lambda$, we thereby have
  an open set $V \subset \C$ where the first three eigenvalues lie in $\C \setminus \overline \D$, and the last eigenvalue is in $\D^*$.
  In this case, the point $(\lambda,-\lambda,0,\infty)$ is hyperbolic for the  map $K \circ \JEA$ with one attracting direction,
  so by the stable manifold theorem (see for example \cite[Ch.~2, Section~6]{PalisdeMelo}), we have a complex one-dimensional manifold that
  is tangential to $(0,0,1,-\frac{1}{{\lambda}^{2}})$, so in a neighborhood, points on the stable manifold give rise to diverging orbits in $\C^4$.
\end{proof}
\subsection{Extension to higher degrees}
We want to extend this result to higher degrees. We can use the following embedding:
\begin{lemma}
 Let $p$ be a polynomial of degree $d$, let $\alpha \in \C$ and let $\tilde{p}(z) =(z - \alpha)p(z)$.
We have commutative diagrams of rational maps
      \begin{tikzcd}[ampersand replacement=\&]
        (\BP^1)^d \ar[d,"\iota_\alpha"] \ar[r,dashed,"\JEA_p"] \& (\BP^1)^d \ar[d,"\iota_\alpha"] \\
        (\BP^1)^{d+1} \ar[r,dashed,"\JEA_{\tilde p}"] \& (\BP^1)^{d+1}
      \end{tikzcd}
      where $ \iota_\alpha\left( z_1,\dots,z_n \right) = (z_1,\dots,z_n,\alpha)$. Moreover, if $z_1,\dots,z_n \in \BP^1 \setminus \left\{ \alpha \right\}$,
      then $\JEA_p(z_1,\dots,z_n)$ is well defined if and only if $\JEA_{\tilde {p}}(z_1,\dots,z_n,\alpha)$ is.
  \label{ea:degemb}
\end{lemma}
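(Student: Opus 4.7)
My plan is to verify the commutative diagram coordinate by coordinate, leveraging the field interpretation from Lemma~\ref{lem:field} and the indeterminacy description from Lemma~\ref{lem:indet}.

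For the first $n$ coordinates of $\JEA_{\tilde p} \circ \iota_\alpha$, I would show that
\[
\EA_{\tilde p}(z_i;\, z_1,\dots,\hat{z_i},\dots,z_n,\alpha) = \EA_p(z_i;\, z_1,\dots,\hat{z_i},\dots,z_n).
\]
By Lemma~\ref{lem:field}, the left-hand side is characterized (generically) by
\[
\frac{1}{z_i - z_i'} = \sum_{j=1}^{d}\frac{1}{z_i - \alpha_j} + \frac{1}{z_i - \alpha} - \sum_{j\neq i}\frac{1}{z_i - z_j} - \frac{1}{z_i - \alpha},
\]
and the two $\frac{1}{z_i-\alpha}$ terms cancel, yielding precisely the defining equation of $\EA_p(z_i;\, z_1,\dots,\hat{z_i},\dots,z_n)$. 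This is the key algebraic cancellation: promoting $\alpha$ simultaneously to a root and to an extra approximation coordinate has no effect on the Ehrlich--Aberth step for the other coordinates.

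For the $(n+1)$-st coordinate of $\JEA_{\tilde p} \circ \iota_\alpha$, I would use Lemma~\ref{lem:indet} directly: the arguments of $\EA_{\tilde p}$ applied to the last coordinate are $\alpha$ (as the active coordinate), $z_1,\dots,z_n$ (as the remaining coordinates, all assumed distinct from $\alpha$), and $\alpha_1,\dots,\alpha_d,\alpha$ (as the roots of $\tilde p$). Provided no $\alpha_j$ equals $\alpha$ and the $z_i$ are distinct from $\alpha$, the active value $\alpha$ coincides with exactly one of these arguments (itself as a root), so Lemma~\ref{lem:indet} gives $\EA_{\tilde p}(\alpha;\, z_1,\dots,z_n) = \alpha$. (If some $\alpha_j$ accidentally equals $\alpha$, the statement is one about rational maps in the indeterminate $\alpha_j$, and identity as rational functions follows from the generic case.) Combining with the previous paragraph gives the diagram.

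For the final clause about well-definedness, I would invoke Lemma~\ref{lem:indet} componentwise. Assuming $z_1,\dots,z_n \in \BP^1 \setminus \{\alpha\}$, the indeterminacy of $\JEA_{\tilde p}$ at $\iota_\alpha(z_1,\dots,z_n)$ in the $i$-th coordinate (for $i \leq n$) requires $z_i$ to lie on two of the diagonals $z_i = z_j$, $z_i = \alpha$, or $z_i = \alpha_k$; but $z_i \neq \alpha$ rules out the new diagonal, so this reduces to the indeterminacy condition for $\JEA_p$. In the $(n+1)$-st coordinate, the analysis above shows the value $\alpha$ meets exactly one diagonal (as a root), so this coordinate contributes no new indeterminacy. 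The main bookkeeping obstacle is making sure that passing between ``rational-map equality'' and ``pointwise equality'' is handled cleanly when some of the $\alpha_j$ might collide with $\alpha$ — I would resolve this by first proving both claims generically (identity of rational functions in the $\alpha_j$ and $z_i$) and then restricting.
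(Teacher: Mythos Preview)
Your approach is correct and is essentially the same as the paper's, which dispatches the entire lemma with the single sentence ``This follows directly from Lemma~\ref{lem:field}.'' Your key observation --- that in the field interpretation the term $\tfrac{1}{z_i-\alpha}$ appears once as a root of $\tilde p$ and once as the extra approximation coordinate and hence cancels --- is exactly the content of that one-line proof; you have simply unpacked it and treated the last coordinate and the well-definedness claim separately via Lemma~\ref{lem:indet}.

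One minor remark: your argument for the ``if and only if'' in the well-definedness clause is slightly circuitous. Lemma~\ref{lem:indet} only gives \emph{containment} of the indeterminacy locus in the union of double diagonals, so matching up the diagonals does not by itself yield the equivalence. The cleanest route is the one you already implicitly have: the cancellation shows that, for $z_i\neq\alpha$, the rational expression defining $\EA_{\tilde p}(z_i;\dots,\alpha)$ is \emph{literally} the expression defining $\EA_p(z_i;\dots)$, so one is well-defined at a point exactly when the other is; and the $(n{+}1)$-st coordinate is always defined (and equal to $\alpha$) once $z_1,\dots,z_n\neq\alpha$, as you argue.
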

\begin{proof}
  This follows directly from Lemma~\ref{lem:field}.
\end{proof}
\begin{proof}[Proof of Theorem \ref{cor:eadeg}]
  Let $k = d - 4$.
  Let $\lambda \in U$ as in Proposition~\ref{thm:escorbits}. Let $\alpha_1, \dots, \alpha_k \in \C \setminus \left\{ 0, \lambda, -\lambda \right\}$ and
  take $\tilde{p} = \prod_i (z-\alpha_i) p(z)$. We use a repeated application of Lemma~\ref{ea:degemb} to embed the dynamics of $\JEA_p$ into
  $\JEA_{\tilde{p}}$ via the map $(z_1,\dots,z_4) \mapsto (z_1,\dots,z_4,\alpha_1,\dots,\alpha_k)$. In a neighborhood of the periodic point $(0,\infty,\lambda,-\lambda)$, we see that $\JEA_{\tilde{p}}$ is well defined on the image of the stable manifold for $(0,\infty,\lambda,-\lambda)$, and this again gives rise to diverging orbits in $\C^d$. %
\end{proof}
\section{Weierstrass}
For a polynomial $p$ of degree $\leq n$, we denote the Gauss--Seidel variant using the Weierstrass step associated to $p$ by $\GSW_p$,
and the cyclic shift from Remark~\ref{rem:cyclic} for the Weierstrass map for cubic polynomials by $R_p$, so
\begin{eqnarray*}
  R_p(z_1,z_2,z_3) = \left(z_2,z_3,z_1 - \frac{p(z_1)}{(z_1-z_2)(z_1-z_3)}\right)\text{.}
\end{eqnarray*}
\label{sec:ws}
\subsection{Affine invariance}
\begin{lemma}[Affine invariance]
  Let $M(z) = {az+b}$ be an affine transformation let $p(z) = \prod_i (z - \alpha_i)$,
  let $\tilde{p}(z) = \prod_i (z - M(\alpha_i))$,
  then $M(\W_p(z_i;z_1,\dots, \hat{z_i},\dots z_n)) = \W_{\tilde{p}}(M(z_i);M(z_1),\dots, \hat{M(z_i)},\dots M(z_n))$
  as rational functions in $(z_i;z_1,\dots, \hat{z_i},\dots z_n)$
  \label{lem:affinv}
\end{lemma}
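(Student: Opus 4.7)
The plan is to reduce everything to the explicit formula \eqref{def:wsstep} for the Weierstrass step and exploit the fact that an affine map $M(z)=az+b$ satisfies $M(u)-M(v)=a(u-v)$ for all $u,v\in\C$. Write
\[
q_i(z)=\frac{p(z)}{\prod_{j\not=i}(z-z_j)},\qquad \tilde q_i(z)=\frac{\tilde p(z)}{\prod_{j\not=i}(z-M(z_j))},
\]
so that $\W_p(z_i;\dots)=z_i-q_i(z_i)$ and $\W_{\tilde p}(M(z_i);\dots)=M(z_i)-\tilde q_i(M(z_i))$.

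The main (and only non-trivial) step is the identity $\tilde q_i(M(z_i))=a\,q_i(z_i)$. To see this, I would substitute $M(z_i)$ into $\tilde q_i$ and use the difference identity above termwise: each of the $n$ factors in the numerator contributes an extra $a$, while each of the $n-1$ factors in the denominator cancels one $a$, leaving a single power of $a$. Explicitly,
\[
\tilde q_i(M(z_i))=\frac{\prod_j\bigl(M(z_i)-M(\alpha_j)\bigr)}{\prod_{j\not=i}\bigl(M(z_i)-M(z_j)\bigr)}=\frac{a^n\prod_j(z_i-\alpha_j)}{a^{n-1}\prod_{j\not=i}(z_i-z_j)}=a\,q_i(z_i).
\]

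With this in hand, the conclusion is immediate:
\[
\W_{\tilde p}(M(z_i);\dots)=M(z_i)-\tilde q_i(M(z_i))=az_i+b-a\,q_i(z_i)=a\bigl(z_i-q_i(z_i)\bigr)+b=M\bigl(\W_p(z_i;\dots)\bigr),
\]
which is the claimed equality of rational functions. Since the whole argument is a direct algebraic manipulation, there is no real obstacle; the only thing to be slightly careful about is keeping track of the number of factors in the numerator versus the denominator, which is exactly why a single factor of $a$ survives (in contrast to the Möbius case of Lemma~\ref{lem:moebinv}, where the discrepancy prevents invariance under arbitrary Möbius transformations and forces one to restrict to the affine subgroup). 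If preferred, one can alternatively check the statement on the generators $z\mapsto z+b$ (trivial) and $z\mapsto az$ (pure homogeneity count), which together generate the affine group.
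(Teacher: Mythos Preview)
Your argument is correct and is exactly the ``straight forward computation'' the paper alludes to: you unwind \eqref{def:wsstep}, use $M(u)-M(v)=a(u-v)$ termwise to get $\tilde q_i(M(z_i))=a\,q_i(z_i)$, and conclude. There is nothing to add.
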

\begin{proof}
  Straight forward computation.
\end{proof}
\subsection{Special case $z^3$}
For $p=z^3$, the Gauss--Seidel variant of the Weierstrass map is homogeneous, in the sense that $\GSW_{z^3}(\lambda z_1, \lambda z_2, \lambda z_3) = \lambda \GSW_{z^3}(z_1, z_2, z_3)$. This is also true for the rotational map $F_{z^3}$ from Remark~\ref{rem:cyclic}. So the map $F_{z^3}$ fibers as rational map over $\phi \colon \BP^2 \dasharrow \BP^2$, where
\begin{eqnarray*}
  \phi( [z_1 : z_2 : z_3 ]) = [z_2 : z_3 : z_1 - \frac{z_1^3}{(z_1-z_2)(z_1-z_3)}] = \big[&(z_1-z_2)(z_1-z_3)z_2 \\
    :&(z_1-z_2)(z_1-z_3)z_3 \\
  :&(z_1-z_2)(z_1-z_3)z_1 - z_1^3\big] \text{.}
\end{eqnarray*}
Since $F^3_p = \GSW_p$, the Gauss-Seidel variant of the Weierstrass map for $z^3$ fibers over $\phi^3$.
\begin{lemma}
  The map $\phi$ has a periodic point $[z_1 : z_2 : z_3]$ of period $3$ such that no coordinate
  vanishes, and $\GSW_{z^3}(z_1,z_2,z_3) = \mu (z_1, z_2, z_3)$ for some $\mu \in \C$ with $|\mu| > 1$.
  Moreover, we can choose a periodic point such that the cycle is repelling for $\phi \colon \BP^2 \dasharrow \BP^2$.
  \label{lem:z3esc}
\end{lemma}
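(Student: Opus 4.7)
The plan is to exhibit the required cycle explicitly, by a computer algebra calculation that solves the period-$3$ condition for $\phi$ directly. Since $F_{z^3}$ is homogeneous of degree $1$, any affine representative $(z_1,z_2,z_3) \in \C^3$ of a period-$3$ point of $\phi$ must satisfy $F_{z^3}^3(z_1,z_2,z_3) = \mu (z_1,z_2,z_3)$ for a unique scalar $\mu \in \C^\ast$, and this $\mu$ depends only on the projective point. Using $F_{z^3}^3 = \GSW_{z^3}$, this is exactly the scaling identity demanded by the lemma, so the task reduces to finding a period-$3$ point of $\phi$ in $\BP^2$ that avoids the coordinate hyperplanes.

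In the affine chart $z_1 = 1$, the condition that $[1:z_2:z_3]$ be a period-$3$ point of $\phi$ with associated scalar $\mu$ becomes a polynomial system in $(z_2, z_3, \mu)$, obtained by clearing the denominators produced by the three iterates of $F_{z^3}$. I will feed this ideal into \emph{Singular}, saturating out the fixed locus of $\phi$ (to discard period-$1$ solutions), the coordinate hyperplanes $z_2 = 0$ and $z_3 = 0$, and the indeterminacy divisors $z_1 = z_2$, $z_1 = z_3$ together with their iterated pullbacks. A Gröbner basis computation then produces the finite list of genuine period-$3$ cycles of $\phi$ off the coordinate hyperplanes, together with the corresponding values of $\mu$.

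From this list I will select a cycle whose scalar satisfies $|\mu|>1$, which settles the main clause of the lemma. For the ``moreover'' assertion, I will compute the differential $D\phi^3$ at the chosen projective point in the same affine chart, obtaining an explicit $2\times 2$ complex matrix by the chain rule along the orbit, and verify numerically from its characteristic polynomial that both eigenvalues lie strictly outside $\overline{\D}$. Equivalently, one can compute $D(F_{z^3}^3)$ at the affine representative, note that $(z_1,z_2,z_3)$ is an eigenvector with eigenvalue $\mu$ by homogeneity, and check that the other two eigenvalues have modulus greater than $|\mu|$.

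Since the proof is fundamentally a calculation, the only conceptual obstacle is existence: a priori it could happen that every period-$3$ cycle of $\phi$ with nonvanishing coordinates either has $|\mu| \leq 1$ or fails to be repelling for $\phi$. The role of the CAS computation is to exhibit an explicit witness where both properties hold, at which point the verification reduces to evaluating two numerical inequalities on the eigenvalues.
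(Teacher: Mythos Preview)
Your proposal is correct and follows essentially the same approach as the paper: reduce the problem to a polynomial system encoding the period-$3$ condition together with the scaling factor, solve it with a computer algebra system, and then verify numerically that some solution has all coordinates nonzero, $|\mu|>1$, and a repelling differential for $\phi$. The only notable difference is bookkeeping: the paper introduces the intermediate image coordinates $z_4,z_5,z_6$ (one per cyclic step of $R_{z^3}$) and the variable $\lambda=\mu^{-1}$, normalizing $z_6=1$, which keeps each equation of low degree and yields an explicit factorization for $\lambda$, whereas you compose the three iterates directly in the chart $z_1=1$ and saturate out the degenerate loci---both setups lead to the same finite solution set and the same verification.
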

\begin{proof}
  We establish this using explicit computations for the map $\GSW_{z^3}$. Let us use variables $z_1,\dots,z_6$. We impose two relations: $(z_4,z_5,z_6)$ should be the
  image of $(z_1,z_2,z_3)$ under the map $\GSW_{z^3}$, and also $(z_4, z_5, z_6)$ should be a scalar multiple of $(z_1,z_2,z_3)$.
  To avoid points of indeterminacy, it is more convenient to impose that $(z_1,z_2,z_3)$ is a scalar multiple of $(z_4,z_5,z_6)$.
  For this we introduce another variable $\lambda$, which will be the inverse of $\mu$.
  Away from points of indeterminacy, the relation $\GSW_{z^3}(z_1,z_2,z_3) = (z_4, z_5, z_6)$ is equivalent to the set of equations
  \begin{eqnarray*}
    (z_1 - z_2)(z_1 - z_3)(z_1 - z_4) - z_1^3  = 0\\
      (z_2 - z_3)(z_2 - z_4)(z_2 - z_5) - z_2^3  = 0\\
      (z_3 - z_4)(z_3 - z_5)(z_3 - z_6) - z_3^3  = 0 \text{.}\\
  \end{eqnarray*}
  The relation $\lambda(z_4,z_5,z_6) = (z_1,z_2,z_3)$ is equivalent to
  \begin{eqnarray*}
      -\lambda z_4 + z_1 = 0 \\
      -\lambda z_5 + z_2  = 0\\
      -\lambda z_6 + z_3 = 0 \text{.}
  \end{eqnarray*}
  We know that the space of possible solutions is homogeneous, so we introduce an extra normalization $z_6 = 1$. Now
  we have a polynomial system of equations that is easily solvable with \emph{Singular} or \emph{Maxima}.
  \emph{Singular} shows this has 18 solutions, and $\lambda$ has to be a root of
  $(\lambda^5 - 2\lambda^4 - 3\lambda^3 + 8\lambda^2 - 4\lambda + 1)(\lambda^3 - 5\lambda^2 + 4\lambda - 1)(\lambda^2 - 3\lambda + 1)$.
  For the factor $(\lambda^2 - 3\lambda +1)$, some coordinates of the cycles are $0$. For the other two factors, we can find solutions
  with $\lambda \in \D^*$, such that all $z_1,\dots,z_6$ are non-zero, and compute the differential of $\phi$ so see that
  there are also solutions that are repelling for $\phi$. %
\end{proof}
\subsection{Diverging Orbits for Gauss--Seidel Weierstrass variant}
Let $p$ be a monic cubic polynomial.
We extend $R_p$ to a map on $\PP^3$. Let $p^*(z,w)$ be the homogenization of $p$, i.e. $p^*(z,w) = p(z/w)w^3$.
Then $R_p$ has the following extension:
\begin{eqnarray*}
  R_p([z_1\colon z_2 \colon z_3 \colon w]) = \big[&(z_1-z_2)(z_1-z_3)z_2 \\
    :&(z_1-z_2)(z_1-z_3)z_3 \\
  :&(z_1-z_2)(z_1-z_3)z_1 - p^*(z_1,w) \\
  :& (z_1-z_2)(z_1-z_3)w\big]
  \label{eqn:R_p_homo}
\end{eqnarray*}
As Gauss--Seidel variant of the Weierstrass map is the third iterate of $R_p$, we focus on
$R_p$ for now.
\begin{lemma} %
  The restriction of $R_p$ to the plane $w = 0$ is independent of $p$ and is given by $\phi$. Moreover,
  if $[z_1:z_2:z_3]$ is periodic for $\phi$ of period $n$, with $F^n_{z^3}(z_1,z_2,z_3) = \mu (z_1,z_2,z_3)$,
  then the eigenvector of $D(F^n_p)$ at $[z_1:z_2:z_3:0]$ transverse to the plane $w = 0$ has eigenvalue $\mu^{-1}$.
  \label{lem:triangluar}
\end{lemma}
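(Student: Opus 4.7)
The plan for the first assertion is immediate: since $p$ is monic cubic, its homogenization satisfies $p^*(z_1,0) = z_1^3$, so substituting $w=0$ in the formula defining $R_p$ on $\PP^3$ recovers exactly the formula for $\phi$ from the special case $p = z^3$.

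For the eigenvalue statement, I would work with the polynomial lift $\tilde R_p \colon \C^4 \to \C^4$, which is homogeneous of degree $3$. My first step would be a direct computation showing that at any point $(z,0)$ on the invariant plane the Jacobian $D\tilde R_p(z,0)$ is block upper triangular with respect to the decomposition $\C^3 \oplus \C_w$: the only nontrivial contribution to the last row and column comes from the fourth coordinate $(z_1-z_2)(z_1-z_3)w$, giving a lower-right entry equal to the scalar $c_1(z) := (z_1-z_2)(z_1-z_3)$.

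My next step is to track the orbit of a chosen lift $v_0 = (z^{(0)}, 0)$ under $\tilde R_p$. Using the homogeneities of $F_{z^3}$ (degree $1$) and $c_1$ (degree $2$), a short induction yields $\tilde R_p^{\,i}(v_0) = \rho_i \, (z^{(i)}, 0)$, where $z^{(i)} = F_{z^3}^i(z^{(0)})$, $\rho_0 = 1$, and $\rho_{i+1} = \rho_i^3\, c_1(z^{(i)})$. The hypothesis $F_{z^3}^n(z^{(0)}) = \mu\, z^{(0)}$ then reads $\tilde R_p^{\,n}(v_0) = \rho_n \mu\, v_0$, so that $[v_0]$ is genuinely fixed by $R_p^n$ with scaling factor $\rho_n \mu$ in the lift.

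Finally I would combine the chain rule with the degree-$2$ homogeneity $D\tilde R_p(\lambda v) = \lambda^2 D\tilde R_p(v)$ to obtain that the lower-right entry of $D\tilde R_p^{\,n}(v_0)$ equals $\prod_{i=0}^{n-1}\rho_i^{\,2} \cdot \prod_{i=0}^{n-1} c_1(z^{(i)})$, which telescopes to $\rho_n$ via the identity $c_1(z^{(i)}) = \rho_{i+1}/\rho_i^{\,3}$. Passing from the homogeneous lift to the projective derivative at a fixed point amounts to reducing modulo the radial direction $\langle v_0 \rangle$ and dividing by the scaling factor $\rho_n \mu$; since the radial direction lies in the first three coordinates, the block-triangular structure survives the quotient, and the transverse eigenvalue becomes $\rho_n/(\rho_n \mu) = \mu^{-1}$. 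The most delicate part of the plan is this last step, the transition from the lift to the projective differential, where both $\rho_n$ and $\mu$ must be tracked; the telescoping identity is what makes them cancel cleanly.
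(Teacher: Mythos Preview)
Your argument is correct, with one small wording slip: it is not true that the last \emph{column} of $D\tilde R_p(z,0)$ has its only nonzero entry in position $(4,4)$. Since $p^*(z_1,w)$ contributes $-\partial_w p^*(z_1,0) = -a_2 z_1^2$ (for $p = z^3 + a_2 z^2 + \cdots$) to the $(3,4)$-entry, the upper-right block is generally nonzero. What you actually need, and what does hold, is that the last \emph{row} is $(0,0,0,c_1(z))$; this gives the block upper triangular form you claim, and the rest of your telescoping computation goes through unchanged.

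Your route differs from the paper's. The paper also observes the $3+1$ block-triangular structure on $\{w=0\}$, but instead of carrying out your explicit computation with the scaling factors $\rho_i$, it simply notes that the relevant diagonal entry is independent of $p$ and therefore the transverse eigenvalue is the same for all cubic $p$. It then specializes to $p = z^3$, where the affine map $F_{z^3}$ on $\C^3 = \{w=1\}$ is homogeneous of degree one; the hypothesis $F_{z^3}^n(z) = \mu z$ then says that the invariant line through $z$ is scaled by $\mu$, so at the point at infinity along that line the derivative is $\mu^{-1}$. Your approach has the advantage of being fully self-contained and of making the passage from the homogeneous lift to the projective differential explicit; the paper's approach buys brevity by exploiting the special geometry of the $p=z^3$ case rather than computing $\rho_n$ directly.
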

\begin{proof}
  On the plane $w = 0$, the Jacobian matrix associated to $R_p$ by equation~\eqref{eqn:R_p_homo} for has a $3 + 1$ triangular block decomposition,
  where the first block is the Jacobian matrix for $\phi$, and the last diagonal entry is independent of $p$.
  From this we see that the eigenvalues of $D(R^n_p)$ for a periodic point $[z_1:z_2:z_3:0]$ are independent of $p$. For $p = z^3$, it is clear that
  the transverse eigenvector has eigenvalue $\mu^{-1}$.
\end{proof}
\begin{proposition}
  For cubic polynomials $p$ with distinct roots, the Gauss--Seidel Weierstrass variant has diverging orbits that go to infinity in every component.
  \label{thm:gsws}
\end{proposition}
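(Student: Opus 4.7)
The strategy is to lift the periodic point from Lemma~\ref{lem:z3esc} to a saddle-type fixed point of $\GSW_p$ on the plane at infinity in $\BP^3$, and then use the stable manifold theorem to produce orbits in the affine chart that diverge in every component.

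First, I would fix the period-$3$ point $[z_1^*:z_2^*:z_3^*]$ of $\phi$ produced by Lemma~\ref{lem:z3esc}: all three coordinates nonzero, the cycle repelling for $\phi$, and $\GSW_{z^3}(z_1^*,z_2^*,z_3^*) = \mu(z_1^*,z_2^*,z_3^*)$ for some $\mu$ with $|\mu| > 1$. Lift this to $P = [z_1^*:z_2^*:z_3^*:0] \in \BP^3$. By Lemma~\ref{lem:triangluar} the plane $\{w=0\}$ is $R_p$-invariant and $R_p$ restricts to $\phi$ on it, so $P$ is a period-$3$ point of $R_p$ and hence a fixed point of $\GSW_p = R_p^3$. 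The pairwise distinctness and nonvanishing of the coordinates of the three $\phi$-cycle points (both needed simply for $\phi$ to be defined along the cycle) keep the whole cycle away from the indeterminacy locus of $R_p$.

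Next, I would read off the eigenvalues of $D(\GSW_p)$ at $P$ using Lemma~\ref{lem:triangluar}. The block-triangular structure splits $T_P \BP^3$ into the tangent space of $\{w=0\}$ at $[z_1^*:z_2^*:z_3^*]$, on which $D(\GSW_p)$ acts as $D(\phi^3)$ with both eigenvalues of modulus $>1$ by the repelling hypothesis, and a transverse line on which the eigenvalue equals $\mu^{-1} \in \D^*$. Thus $P$ is hyperbolic of saddle type for $\GSW_p$ with a single attracting direction, transverse to $\{w=0\}$. The stable manifold theorem~\cite[Ch.~2, Section~6]{PalisdeMelo} then yields a one-dimensional complex stable manifold $W^s(P)$ tangent to this transverse direction. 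Since that direction is not contained in the plane at infinity, a punctured neighborhood of $P$ in $W^s(P)$ lies in the affine chart $\C^3 = \{w \neq 0\}$ and furnishes $\GSW_p$-orbits in $\C^3$ whose projective limit is $P$; because all three $z_i^*$ are nonzero, passing to the affine chart $w=1$ forces every affine coordinate to tend to $\infty$.

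The main obstacle is essentially already discharged by the preceding two lemmas: Lemma~\ref{lem:z3esc} provides, via an explicit computer-algebra calculation, a repelling $\phi$-cycle together with $|\mu|>1$, and Lemma~\ref{lem:triangluar} identifies the transverse eigenvalue as $\mu^{-1}$. What remains above is a direct application of the stable manifold theorem, together with the bookkeeping that $P$ lies outside the indeterminacy locus and that convergence to $P$ in $\BP^3$ translates into divergence in every affine coordinate.
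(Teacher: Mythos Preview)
Your argument is correct and follows exactly the paper's approach: lift the repelling $\phi$-cycle of Lemma~\ref{lem:z3esc} to the plane $w=0$ in $\BP^3$, use Lemma~\ref{lem:triangluar} to identify the transverse eigenvalue $\mu^{-1}\in\D^*$, and apply the stable manifold theorem to obtain diverging orbits in $\C^3$ with all coordinates tending to infinity. Your write-up simply spells out the details that the paper's two-sentence proof leaves implicit.
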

\begin{proof}
  In the language of Lemma~\ref{lem:triangluar}, the periodic point constructed in Lemma~\ref{lem:z3esc}
  is a periodic point for $R_p$ on the plane $w=0$ that has an attracting direction transverse to the plane, is repelling on $w = 0$,
  and has no other vanishing coordinate.
  Now we can again apply the stable manifold theorem to see that the points in the stable manifold of the periodic point are the diverging orbits we want.
\end{proof}
\subsection{Extension to higher degrees}

\begin{lemma}
  Let $p$ be a polynomial of degree $d$, let $\alpha \in \C$ and let $\tilde{p}(z) =(z - \alpha)p(z)$. For the Gauss--Seidel variant of the Weierstrass method,
we have commutative diagrams of rational maps
      \begin{tikzcd}[ampersand replacement=\&]
        \C^d \ar[d,"\iota_\alpha"] \ar[r,dashed,"\GSW_p"] \& \C^d \ar[d,"\iota_\alpha"] \\
        \C^{d+1} \ar[r,dashed,"\GSW_{\tilde p}"] \& \C^{d+1}
      \end{tikzcd}
      where $ \iota_\alpha\left( z_1,\dots,z_n \right) = (z_1,\dots,z_n,\alpha)$. Moreover, if $z_1,\dots,z_n \in \C \setminus \left\{ \alpha \right\}$,
      and $\GSW_p(z_1,\dots,z_n)$ is well defined and has no coordinate equal to $\alpha$, then $\GSW_{\tilde {p}}(z_1,\dots,z_n,\alpha)$ is well defined and
      equal to $\iota_\alpha\left( z_1,\dots,z_n \right)$.
  \label{lem:degembws}
\end{lemma}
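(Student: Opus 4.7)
The plan is to unroll both sides of the commutative diagram by walking through the $d+1$ substeps of the Gauss--Seidel loop on $\tilde p$ applied to $\iota_\alpha(z_1,\dots,z_d) = (z_1,\dots,z_d,\alpha)$, and to verify two things: first, that the first $d$ substeps reproduce exactly the substeps of $\GSW_p$ on $(z_1,\dots,z_d)$; and second, that the final substep leaves the last coordinate equal to $\alpha$. Both will be direct algebraic manipulations of the formula $\W_p(z_i;\dots) = z_i - q_i(z_i)$; the result for rational maps then follows because the identity will hold on a Zariski open set.

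For the first $d$ substeps, I would argue as follows. At substep $i \leq d$, the tuple has been updated to some $(w_1,\dots,w_{i-1},z_i,z_{i+1},\dots,z_d,\alpha)$, and $\W_{\tilde p}$ uses the rational function
\[
\tilde q_i(z) = \frac{\tilde p(z)}{(z-\alpha)\prod_{j<i}(z-w_j)\prod_{j>i,\,j\leq d}(z-z_j)}.
\]
Since $\tilde p(z) = (z-\alpha)p(z)$, the factor $(z-\alpha)$ cancels and we obtain exactly the function $q_i(z) = p(z)/\prod_{j\neq i}(z-z_j^{\mathrm{cur}})$ used by $\W_p$. Hence the $i$-th substep of $\GSW_{\tilde p}$ produces the same new value $w_i$ as the $i$-th substep of $\GSW_p$, and by induction after $d$ substeps we arrive at $(w_1,\dots,w_d,\alpha)$ with $(w_1,\dots,w_d) = \GSW_p(z_1,\dots,z_d)$.

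For the final substep $i = d+1$, the update is
\[
\alpha - \frac{\tilde p(\alpha)}{\prod_{j=1}^{d}(\alpha-w_j)}.
\]
Because $\tilde p(\alpha) = 0$, this equals $\alpha$ provided the denominator is nonzero, which is exactly the hypothesis that none of the $w_j$ equals $\alpha$. Combining this with the previous paragraph yields $\GSW_{\tilde p}(\iota_\alpha(z_1,\dots,z_d)) = \iota_\alpha(\GSW_p(z_1,\dots,z_d))$ on the open set where $\GSW_p$ is well defined, all $z_j,w_j$ differ from $\alpha$, and all denominators appearing above are nonzero.

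This simultaneously establishes commutativity of the diagram of rational maps (equality on a nonempty Zariski open set forces equality of rational maps) and the ``moreover'' clause on well-definedness. The main (very minor) obstacle is just keeping the bookkeeping clean: at each substep one has to note that the factor $(z-\alpha)$ sits in the denominator of $\tilde q_i$ precisely because $\alpha$ is currently one of the ``other'' coordinates, and that the hypothesis $w_j \neq \alpha$ is exactly what is needed to make the cancellation above (and the final denominator) legitimate.
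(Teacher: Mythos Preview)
Your proof is correct and follows essentially the same approach as the paper's: unrolling the Gauss--Seidel loop, observing that the factor $(z-\alpha)$ cancels in each of the first $d$ substeps, and then using $\tilde p(\alpha)=0$ together with the hypothesis on the output coordinates for the final substep. The paper's proof is a two-sentence sketch of exactly this argument; yours simply fills in the bookkeeping.
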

\begin{proof}
  This follows directly from the defining equation~\ref{def:wsstep}, by stepping through the algorithm~\ref{algo:gs}.
  A key point here is that for the algorithm applied for $\GSW_{\tilde{p}}(z_1,\dots,z_n,\alpha)$, the contribution of $\alpha$ cancels under the assumption $z_i \not= \alpha$, the final step is $W_{\tilde{p}}(\alpha; z'_1,\dots,z'_n)$, there the second part of the assumption is used.
\end{proof}
\begin{proof}[Proof of Theorem~\ref{cor:wsdeg}]
  Let $k = d - 3$ and choose a factorization $\tilde{p}(z) = p(z) \prod_i(z - \alpha_i)$, where $\alpha_1,\dots,\alpha_k \in \C$ are pairwise different.
  By Proposition~\ref{thm:gsws}, there are $z^j_i \in C, 1\leq i \leq 3, j \in \N$, such that $(z^j_1, z^j_2, z^j_3)$ is an infinite forward orbit under the Gauss--Seidel variant for $p$, with $z^j_i \rightarrow \infty$ for all $1 \leq i \leq 3$. By taking a tail of the orbit, we can assume that none of the coordinates $z^j_i$ are equal to roots of $\tilde{p}(z)$. Now we can iteratively apply Lemma~\ref{lem:degembws} to see that $\left( z^j_1, z^j_2, z^j_3, \alpha_1,\dots,\alpha_k \right)$ is a well defined forward orbit for the Gauss--Seidel variant of the Weierstrass method for $\tilde{p}$,
  and the first 3 components go off to infinity.
  \end{proof}
\section{Outlook}
In this paper, we extended the results about diverging orbits for the Weierstrass method by changing
the step function to the Ehrlich--Aberth method or by changing the iteration procedure to the Gauss--Seidel variant.
A natural next step is to consider the Gauss--Seidel variant of the Ehrlich--Aberth method. Based on the Möbius invariance of
the Ehrlich--Aberth method, this is equivalent to finding periodic cycles with at least one attracting direction.
However, we are not aware of explicit descriptions of a family of
periodic points as in the Jacobi case.

Also, the existence of attracting cycles is still open for the various methods discussed here apart from the Jacobi variant of the Weierstrass method.
An intermediate question would be the existence of periodic points with multiple attracting eigenvalues.
\bibliographystyle{alpha}
\bibliography{../bibfile.bib}

\begin{thebibliography}{DGPS20}

\bibitem[Abe73]{Aberth1973}
Oliver Aberth.
\newblock Iteration methods for finding all zeros of a polynomial
  simultaneously.
\newblock {\em Math. Comp.}, 27:339--344, 1973.

\bibitem[DGPS20]{DGPS}
Wolfram Decker, Gert-Martin Greuel, Gerhard Pfister, and Hans Sch\"onemann.
\newblock {\sc Singular} {4-1-3} --- {A} computer algebra system for polynomial
  computations, 2020.

\bibitem[HSS01]{NewtonHSS}
John Hubbard, Dierk Schleicher, and Scott Sutherland.
\newblock How to find all roots of complex polynomials by {N}ewton's method.
\newblock {\em Invent. Math.}, 146(1):1--33, 2001.

\bibitem[Max20]{Maxima}
Maxima.
\newblock Maxima, a computer algebra system. version 5.44.0, 2020.

\bibitem[PdM82]{PalisdeMelo}
Jacob Palis, Jr. and Welington de~Melo.
\newblock {\em Geometric theory of dynamical systems}.
\newblock Springer-Verlag, New York-Berlin, 1982.
\newblock An introduction, Translated from the Portuguese by A. K. Manning.

\bibitem[RSS20]{reinke_weierstrass_2020}
Bernhard Reinke, Dierk Schleicher, and Michael Stoll.
\newblock The {Weierstrass} root finder is not generally convergent.
\newblock arXiv: 2004.04777, April 2020.

\bibitem[{Sag}20]{sagemath}
{Sage Developers}.
\newblock {\em {S}ageMath, the {S}age {M}athematics {S}oftware {S}ystem
  ({V}ersion 9.1)}, 2020.
\newblock {\tt https://www.sagemath.org}.

\end{thebibliography}
\end{document}